\pdfoutput=1
\documentclass[12pt,a4paper,oneside]{amsart}

\usepackage{lmodern}
\usepackage[T1]{fontenc}
\usepackage[latin1]{inputenc}
\usepackage[left=3cm, right=3cm, bottom=4cm, top=3cm]{geometry}
\usepackage{amsmath, amssymb, amsfonts, graphics, graphicx, float}
\usepackage{multirow}
\usepackage{bigdelim}
\usepackage{parskip}
\usepackage{enumerate}
\usepackage{csquotes}
\usepackage{pgf,tikz}
\usetikzlibrary{arrows}

\newtheorem{thm}{Theorem}
\newtheorem{prop}[thm]{Proposition}

\newtheorem{cor}[thm]{Corollary}
\newtheorem{definition}[thm]{Definition}
\newtheorem{remark}[thm]{Remark}
\theoremstyle{definition}
\newtheorem{ex}[thm]{Example}
\newtheorem{note}[thm]{Notation}
\numberwithin{equation}{section}

\usepackage{multirow}
\usepackage{bigdelim}

\setlength{\footskip}{2\baselineskip}
\newcommand{\N}{\mathbb{N}}

\newcommand{\R}{\mathbb{R}}
\newcommand{\C}{\mathbb{C}}
\newcommand{\Q}{\mathbb{Q}}
\newcommand{\rr}{\R[V^k]}
\newcommand{\rx}{\R[V^k]^{S_n}}

\newcommand\restr[2]{{
  \left.\kern-\nulldelimiterspace 
  #1
  \right|_{#2} 
  }}

\DeclareMathOperator{\sym}{sym}

\usepackage{hyperref} 

\title{Deciding positivity  of multisymmetric polynomials}
\author{Paul G\"orlach}
\address{Mathematisches Institut der Universit\"at Bonn\\
Endenicher Allee 60\\
53115 Bonn, Germany }
\email{goerlach@uni-bonn.de}
\author{Cordian  Riener}
\address{Aalto Science Institute, Aalto University\\
PO Box  11000\\
FI-00076 Aalto, Finland}
\email{cordian.riener@aalto.fi}
\author{Tillmann Wei\ss er}
\address{Fachbereich Mathematik und Statistik,
Universit\"at Konstanz, 78457 Konstanz, Germany.}

\email{tillmann.weisser@uni-konstanz.de}
\begin{document}
\begin{abstract}
The question how to certify non-negativity of a polynomial function lies at the heart of Real Algebra and also has important applications to Optimization. In this article we investigate the question of non-negativity in the context of multisymmetric  polynomials. In this setting we generalize the characterization of non-negative symmetric polynomials given in \cite{timofte-2003,Rie} by adapting the method of proof developed in \cite{Rie2}. One particular case where  our results can be applied is the question of certifying that a (multi-)symmetric polynomial defines a convex function. As a direct corollary of our main result we deduce that in the case of a fixed degree it is possible to derive a method to test for convexity which makes use of the special structure of (multi-) symmetric polynomials. In particular it follows that we are able to drastically simplify the algorithmic complexity of this question in the presence of symmetry. This is not to be expected in the general (i.e. non-symmetric) case,  where it is known that testing for convexity is NP-hard already in the case of polynomials of degree $4$\cite{amir}.

\end{abstract}

\maketitle

\section{Introduction}
A real polynomial  is called positive (non-negative) if its evaluation on every real point  is positive  (non-negative). The study of this property of polynomials functions is one of the aspects that separates Real Algebraic Geometry from Algebraic Geometry over algebraically closed fields. Indeed, Real Algebraic Geometry  developed building on Hilbert's problem of characterizing non-negative polynomials via sums of squares. On the complexity side, it is know that the problem to algorithmically decide whether a given polynomial assumes only positive or non-negative values is NP-hard in general (see for example \cite{murty,blum}) and is essential for example to understand global optimization of polynomial functions. Besides the general results some authors have studied particular cases of polynomials which are invariant under group actions, for example by permuting the variables. In particular, symmetric polynomials, i.e. polynomials invariant under all permutations of the variables, exhibit some interesting properties that behave differently over real closed and algebraically closed fields.  For example, in \cite{BS} the authors show, that the number of connected components of the orbit space of a complex variety which is defined by symmetric polynomials of a given maximal degree  is bounded by a quantity which (for a large number of variables) only depends on this maximal degree. In contrast to that there are examples of real varieties where this does not hold, i.e., this quantity  actually grows with the number of variables. Therefore, the geometry of symmetric real varieties and semi-algebraic sets can be much more complicated. 

Combining  Artin's solution to Hilbert's 17th problem with Hermite's quadratic form, which characterizes real univariate polynomials with only real roots, Procesi \cite{procesi} was able to give a {\em Positivstellensatz} characterizing all symmetric non-negative polynomials. However, this characterization is not very advantageous in situations, where the degree of the polynomials is much smaller than the number of variables.  For this situations, Timofte \cite{timofte-2003} was able to provide a characterization of symmetric non-negative functions of a fixed degree that can be used to algorithmically certify non-negativity: He could establish that a symmetric polynomial  of degree $2d$ is non-negative if and only if it is non-negative on all points  with at most $d$ distinct coordinates.  This observation, which generalized an earlier statement by Harris \cite{harris1999real}, leads to a number of interesting consequences. For example, it provides an essential part to the description of the asymptotical behavior of the cone of symmetric non-negative forms of a given degree when the number of variables  grows \cite{GC}.  Algorithmically, this result allows to show that the complexity of deciding non-negativity  of a symmetric function with a fixed degree only grows  polynomially in the number of variables. Following the work of Timofte, the second author was able to provide short proofs of this characterization of symmetric non-negative polynomials \cite{Rie,Rie2}.

{\bf Contributions:} In this article we extend the previous results on symmetric polynomials to arrive at a similar characterization of multisymmetric polynomial functions that assume only positive (non-negative) values.  The class of  multisymmetric polynomials naturally generalizes the symmetric polynomials. Whereas symmetric polynomials are invariant by all permutations of the variables, multisymmetric polynomials  can be thought of as functions which are invariant  under simultaneously permuting $k$-tuples of variables. Similar to the case of symmetric polynomials, we are able to show that when the degree of such a polynomial is sufficiently smaller than  the number of variables, also for these multisymmetric polynomials non-negativity can be checked on a lower-dimensional subset consisting of points whose orbit length is not maximal. As in the case of the usual action of the symmetric group, \ these points lie on linear subspaces. 

Our main result is Theorem~\ref{thm:Main} which bounds the dimension of subspaces one has to consider to decide  non-negativity of a multisymmetric  polynomial.
Besides this general bound the idea of the proof can be adjusted in particular situations to derive stronger bounds. We give several other bounds to illustrate this. 

As an application of our results we investigate the question of deciding if a given symmetric or multisymmetric polynomial defines a convex function.
It is straightforward to observe that the question of convexity of a $k$-symmetric polynomial in $kn$ many variables leads to the question of certifying whether a $2k$-symmetric polynomial in $2kn$ variables is non-negative. Consequently,   we show in Theorems~\ref{thm:Hf} and~\ref{thm:convex2} that our results on non-negativity of multisymmetric polynomials imply in particular that for (multi-)symmetric polynomials of a fixed degree the complexity of deciding convexity does depend polynomially on  the number of variables.

The article is structured as follows. In the next section we will give a brief introduction to the theory of multisymmetric polynomials and provide a relation between $k$-symmetric polynomials and $k$-variate polynomials which will play a crucial role in our arguments. With these preliminaries at hand we are able to state and prove the main result in Section 3. The last section is then devoted to the application of the non-negativity result to the problem of deciding convexity. This section  also includes some refinements of our main results which apply to this setting.

\section{Multisymmetric Polynomials}
For $n\in\N$ let $S_n$ denote the symmetric group on $n$ elements which acts on the $n$-dimensional vector space $V:=\R^n$ by permuting coordinates. For every $k\in\N$ we can consider the diagonal action of $S_n$ on the vector space $$V^{k}:=\bigoplus_{i=1}^{k}V$$ of $k$-tuples of vectors from $V$. 
The action of $S_n$ extends to the ring $\rr$, which is just the polynomial ring $\R[X_{11},\dots,X_{nk}]$ after identifying $X_{i1},\dots,X_{ik}$ with the standard basis of the $i$-th direct summand of $V^k$. It is convenient to think of these variables as an $n\times k$ array as indicated in the following notation.

\begin{note}
Throughout this paper let
                        \[X:=\begin{pmatrix} X_{11}&X_{12}&\ldots& X_{1k}\\
                        X_{21}&X_{22}&\ldots& X_{2k}\\
                        \vdots&\vdots&\cdots&\vdots\\
                        X_{n1}&X_{n2}&\ldots& X_{nk}\\
                        \end{pmatrix}\]
denote an $n\times k$ array of variables. By $X_{i\cdot}$ and $X_{\cdot j}$ we denote the i-th row of $X$ and the j-th column of $X$, respectively.
\end{note}
Following this notation, $S_n$ permutes the rows of $X$.
The invariant ring of the polynomial ring $\rr = \R[X_{11},\dots,X_{nk}]$ with respect to this action is the algebra of $k$-symmetric polynomials, denoted by $\rx$.
Alternatively, $\rx$ can be thought of as the $n$-fold symmetric product of the polynomial $\R$-algebra in $k$ variables, which is a classically studied object (see \cite{sch,junker_93}).
Note that for $k=1$ 
this is
the algebra of symmetric polynomials, which is a polynomial ring, i.e., there are $n$ algebraically independent polynomials $p_1,\ldots,p_n$ such that $\R[X_1,\ldots,X_n]^{S_n}=\R[p_1,\ldots,p_n]$. In the case $k>1$, $n>1$ the symmetric group $S_n$ is not operating as a finite reflection group. Therefore, it follows from the  classical Chevalley-Shephard-Todd Theorem (see for example \cite{Chev}) that whenever $k>1$ and $n>1$  the algebra of $k$-symmetric polynomials in $nk$ variables is no longer a polynomial ring. However, for all $k$ it is finitely generated as $\R$-algebra \cite{Bri,dalbec}. For our purposes the representation in terms of so called multisymmetric power sums will be crucial.

\begin{definition}
Given a polynomial $f\in\rr$, we denote its symmetrization, that is, the sum over its $S_n$-orbit, by $\sym(f)\in\rx$. For all $\alpha\in\N^k$ we define the (multisymmetric) power sum
  \[
  p_\alpha:=\sym(X_{1\cdot}^\alpha),
  \]
  where $X_{1\cdot}^\alpha:=X_{11}^{\alpha_1}\cdots X_{1k}^{\alpha_k}$.
\end{definition}

This is is a generalization of the power sum polynomials used in \cite{Rie2}, where the symmetric case $k=1$ is considered. In that case, the first $n$ power sum polynomials form an algebraically independent set generating the $\R$-algebra of symmetric polynomials. In the $k$-symmetric case, this  statement does not generalize, i.e., the generators are no longer algebraically independent.

In the following, we study the $k$-symmetric power sums and their connection to $k$-symmetric functions. 

\begin{definition}\label{def:wDeg}
Let $w:=(w_1, \dots, w_k)$ be a $k$-tuple of positive integers. We consider the grading on the $\R$-algebra $\R[Y_1,\dots,Y_k]$ given by defining $Y_j$ to be homogeneous of degree $w_j$. This grading also induces a grading on the $\R$-algebra $\rr$ by the algebra-homomorphism $\varphi:\rr \to \R[Y_1,\dots,Y_k]$, $X_{ij} \mapsto Y_j$. Alternatively, the latter grading is given by defining each $X_{ij}$ to be homogeneous of degree $w_j$.

The degree $\deg_w (f)$ of an element $f \in \R[Y_1,\dots,Y_k]$ (resp. $f \in \rr$) with respect to the above grading  is called the $w$-weighted degree, or simply $w$-degree, of $f$.
\end{definition}

Alternatively, one can define the $w$-degree on the monomials of $\R[Y_1,\ldots,Y_k]$ and $\rr$ by $\deg_w(Y^{\alpha}):=\sum_{j=1}^{k}w_j\alpha_{j}$ and 
{$\deg_w(X_{1\cdot}^{\alpha^{(1)}}\cdots X_{n\cdot}^{\alpha^{(n)}}):=\sum_{i=1}^{n}\sum_{j=1}^{k}w_j\alpha_j^{(i)}$}

, respectively, where {$\alpha,\alpha^{(1)},\ldots,\alpha^{(n)}\in\N^k$}. 
After that, one extends this definition to a polynomial $f$ by taking the maximal $w$-degree of all monomials of $f$. Note that we retrieve the usual degree from this definition by setting all weights $w_j$ equal to~$1$.
\begin{ex}\label{ex:1}
Fix $n\in\N$ rather large and consider for all (nonzero) parameters $\gamma\in\R^7 $ the $2$-symmetric polynomial given by 
\begin{multline*}
f(X):=
\gamma_1\sum_iX_{i1}^4
+\gamma_2\left(\sum_i\sum_jX_{i1}X_{j1}\right)^2
-\gamma_3\left(\sum_i\sum_jX_{i1}X_{j1}\right)\left(\sum_iX_{i1}X_{i2}\right)\\
-\gamma_4\left(\sum_iX_{i1}^3\right)\left(\sum_iX_{i2}\right)
-\gamma_5\left(\sum_iX_{i1}\right)\left(\sum_iX_{i2}\right)^2
+\gamma_6\left(\sum_iX_{i1}\right)^2
+\gamma_7\left(\sum_iX_{i1}X_{i2}\right),
\end{multline*}
where all sums go from $1$ to $n$. We have $\deg_{(1,1)}(f)=\deg(f)=4$. Recognizing that the exponents of the column $X_{\cdot 2}$ are small compared to the exponents of $X_{\cdot 1}$ one could give more weight to the second column, e.\,g. by considering the $(3,5)$-degree: $\deg_{(3,5)}(f)=14$.
\end{ex}
\begin{note}\label{not:morphismus,Mf,Ef}
Fix some weights $w$. For a polynomial $f \in \rr$ we define $M_f\subset \rr$ to be the set of monomials of $f$. By construction, $\varphi: \rr \to \R[Y_1, \dots, Y_k]$, $X_{ij} \mapsto Y_j$ is a morphism of graded $\R$-algebras, considering the gradings as above. Thus, 
\[
\deg_w (f) = \max\{\deg_w (m) \mid m \in \varphi(M_f)\} = \max\{w^T\alpha \mid \alpha \in E_f\},
\]
where $E_f\subset \N^k$ is the set of exponent tuples of all monomials in $\varphi(M_f)$. Note however, that in general $\deg_w (f) \neq \deg_w (\varphi(f))$, {as $\varphi(M_f)$ may differ from $M_{\varphi(f)}$}.
\end{note}
 Let $\deg_w(f)=d$. Then the interpretation above gives rise to a very useful view on the $w$-degree as a hyperplane defining a simplex $\{y\in\R_{\geq 0}^k\mid w^Ty\leq d\}$ enclosing $E_f$. Applied to Example~\ref{ex:1}, where $k=2$, we can consider Figure~\ref{fig:1}. Note that $\varphi(M_f)=\{Y_1^4,Y_1^3Y_2,Y_1^2,Y_1Y_2^2,Y_1Y_2\}$ and hence, $E_f=\{(4,0),(3,1),(2,0),(1,2),(1,1)\}$. Both choices of the weights $w$ in Example~\ref{ex:1} define a triangle enclosing $E_f$.
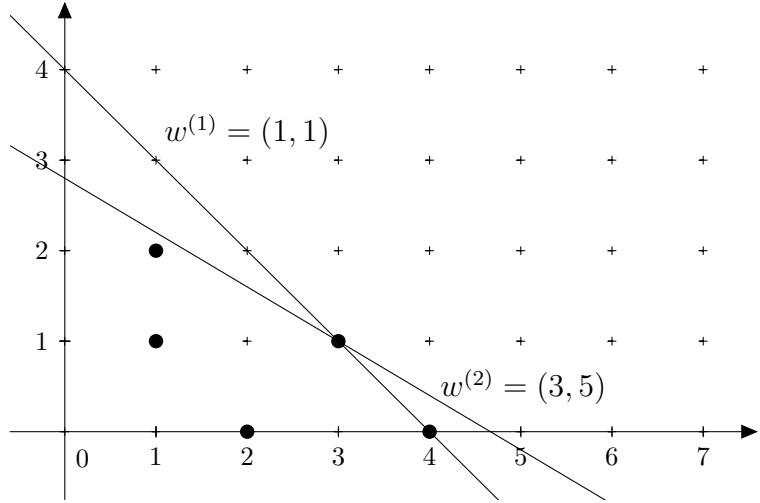
\begin{figure}
\begin{tikzpicture}[line cap=round,line join=round,>=triangle 45,x=1.2cm,y=1.2cm]
\draw[->,color=black] (-0.6,0.0) -- (7.6,0.0);
\foreach \x in {,1,2,3,4,5,6,7}
\draw[shift={(\x,0)},color=black] (0pt,2pt) -- (0pt,-2pt) node[below] {\footnotesize $\x$};
\draw[->,color=black] (0.0,-0.75) -- (0.0,4.75);
\foreach \y in {,1,2,3,4}
\draw[shift={(0,\y)},color=black] (2pt,0pt) -- (-2pt,0pt) node[left] {\footnotesize $\y$};
\draw[color=black] (0pt,-10pt) node[right] {\footnotesize $0$};
\clip(-0.6,-0.75) rectangle (7.6,4.75);
\draw [domain=-0.6:7.6] plot(\x,{-1*\x+4});
\draw [color=black] (2,3) node[above] {{$w^{(1)}=(1,1)$}};
\draw [domain=-0.6:7.6] plot(\x,{-0.6*\x+2.8});
\draw [color=black] (4,0.5) node[right] {{$w^{(2)}=(3,5)$}};

\draw [color=black] (0.0,0.0)-- ++(-1.5pt,0 pt) -- ++(3.0pt,0 pt) ++(-1.5pt,-1.5pt) -- ++(0 pt,3.0pt);
\draw [color=black] (1.0,0.0)-- ++(-1.5pt,0 pt) -- ++(3.0pt,0 pt) ++(-1.5pt,-1.5pt) -- ++(0 pt,3.0pt);
\draw [fill=black] (2.0,0.0) circle (2.5pt);
\draw [color=black] (3.0,0.0)-- ++(-1.5pt,0 pt) -- ++(3.0pt,0 pt) ++(-1.5pt,-1.5pt) -- ++(0 pt,3.0pt);
\draw [fill=black] (4.0,0.0) circle (2.5pt);
\draw [color=black] (5.0,0.0)-- ++(-1.5pt,0 pt) -- ++(3.0pt,0 pt) ++(-1.5pt,-1.5pt) -- ++(0 pt,3.0pt);
\draw [color=black] (6.0,0.0)-- ++(-1.5pt,0 pt) -- ++(3.0pt,0 pt) ++(-1.5pt,-1.5pt) -- ++(0 pt,3.0pt);
\draw [color=black] (7.0,0.0)-- ++(-1.5pt,0 pt) -- ++(3.0pt,0 pt) ++(-1.5pt,-1.5pt) -- ++(0 pt,3.0pt);
\draw [color=black] (0.0,1.0)-- ++(-1.5pt,0 pt) -- ++(3.0pt,0 pt) ++(-1.5pt,-1.5pt) -- ++(0 pt,3.0pt);
\draw [fill=black] (1.0,1.0) circle (2.5pt);
\draw [color=black] (2.0,1.0)-- ++(-1.5pt,0 pt) -- ++(3.0pt,0 pt) ++(-1.5pt,-1.5pt) -- ++(0 pt,3.0pt);
\draw [fill=black] (3.0,1.0) circle (2.5pt);
\draw [color=black] (4.0,1.0)-- ++(-1.5pt,0 pt) -- ++(3.0pt,0 pt) ++(-1.5pt,-1.5pt) -- ++(0 pt,3.0pt);
\draw [color=black] (5.0,1.0)-- ++(-1.5pt,0 pt) -- ++(3.0pt,0 pt) ++(-1.5pt,-1.5pt) -- ++(0 pt,3.0pt);
\draw [color=black] (6.0,1.0)-- ++(-1.5pt,0 pt) -- ++(3.0pt,0 pt) ++(-1.5pt,-1.5pt) -- ++(0 pt,3.0pt);
\draw [color=black] (7.0,1.0)-- ++(-1.5pt,0 pt) -- ++(3.0pt,0 pt) ++(-1.5pt,-1.5pt) -- ++(0 pt,3.0pt);
\draw [color=black] (0.0,2.0)-- ++(-1.5pt,0 pt) -- ++(3.0pt,0 pt) ++(-1.5pt,-1.5pt) -- ++(0 pt,3.0pt);
\draw [fill=black] (1.0,2.0) circle (2.5pt);
\draw [color=black] (2.0,2.0)-- ++(-1.5pt,0 pt) -- ++(3.0pt,0 pt) ++(-1.5pt,-1.5pt) -- ++(0 pt,3.0pt);
\draw [color=black] (3.0,2.0)-- ++(-1.5pt,0 pt) -- ++(3.0pt,0 pt) ++(-1.5pt,-1.5pt) -- ++(0 pt,3.0pt);
\draw [color=black] (4.0,2.0)-- ++(-1.5pt,0 pt) -- ++(3.0pt,0 pt) ++(-1.5pt,-1.5pt) -- ++(0 pt,3.0pt);
\draw [color=black] (5.0,2.0)-- ++(-1.5pt,0 pt) -- ++(3.0pt,0 pt) ++(-1.5pt,-1.5pt) -- ++(0 pt,3.0pt);
\draw [color=black] (6.0,2.0)-- ++(-1.5pt,0 pt) -- ++(3.0pt,0 pt) ++(-1.5pt,-1.5pt) -- ++(0 pt,3.0pt);
\draw [color=black] (7.0,2.0)-- ++(-1.5pt,0 pt) -- ++(3.0pt,0 pt) ++(-1.5pt,-1.5pt) -- ++(0 pt,3.0pt);
\draw [color=black] (0.0,3.0)-- ++(-1.5pt,0 pt) -- ++(3.0pt,0 pt) ++(-1.5pt,-1.5pt) -- ++(0 pt,3.0pt);
\draw [color=black] (1.0,3.0)-- ++(-1.5pt,0 pt) -- ++(3.0pt,0 pt) ++(-1.5pt,-1.5pt) -- ++(0 pt,3.0pt);
\draw [color=black] (2.0,3.0)-- ++(-1.5pt,0 pt) -- ++(3.0pt,0 pt) ++(-1.5pt,-1.5pt) -- ++(0 pt,3.0pt);
\draw [color=black] (3.0,3.0)-- ++(-1.5pt,0 pt) -- ++(3.0pt,0 pt) ++(-1.5pt,-1.5pt) -- ++(0 pt,3.0pt);
\draw [color=black] (4.0,3.0)-- ++(-1.5pt,0 pt) -- ++(3.0pt,0 pt) ++(-1.5pt,-1.5pt) -- ++(0 pt,3.0pt);
\draw [color=black] (5.0,3.0)-- ++(-1.5pt,0 pt) -- ++(3.0pt,0 pt) ++(-1.5pt,-1.5pt) -- ++(0 pt,3.0pt);
\draw [color=black] (6.0,3.0)-- ++(-1.5pt,0 pt) -- ++(3.0pt,0 pt) ++(-1.5pt,-1.5pt) -- ++(0 pt,3.0pt);
\draw [color=black] (7.0,3.0)-- ++(-1.5pt,0 pt) -- ++(3.0pt,0 pt) ++(-1.5pt,-1.5pt) -- ++(0 pt,3.0pt);
\draw [color=black] (0.0,4.0)-- ++(-1.5pt,0 pt) -- ++(3.0pt,0 pt) ++(-1.5pt,-1.5pt) -- ++(0 pt,3.0pt);
\draw [color=black] (1.0,4.0)-- ++(-1.5pt,0 pt) -- ++(3.0pt,0 pt) ++(-1.5pt,-1.5pt) -- ++(0 pt,3.0pt);
\draw [color=black] (2.0,4.0)-- ++(-1.5pt,0 pt) -- ++(3.0pt,0 pt) ++(-1.5pt,-1.5pt) -- ++(0 pt,3.0pt);
\draw [color=black] (3.0,4.0)-- ++(-1.5pt,0 pt) -- ++(3.0pt,0 pt) ++(-1.5pt,-1.5pt) -- ++(0 pt,3.0pt);
\draw [color=black] (4.0,4.0)-- ++(-1.5pt,0 pt) -- ++(3.0pt,0 pt) ++(-1.5pt,-1.5pt) -- ++(0 pt,3.0pt);
\draw [color=black] (5.0,4.0)-- ++(-1.5pt,0 pt) -- ++(3.0pt,0 pt) ++(-1.5pt,-1.5pt) -- ++(0 pt,3.0pt);
\draw [color=black] (6.0,4.0)-- ++(-1.5pt,0 pt) -- ++(3.0pt,0 pt) ++(-1.5pt,-1.5pt) -- ++(0 pt,3.0pt);
\draw [color=black] (7.0,4.0)-- ++(-1.5pt,0 pt) -- ++(3.0pt,0 pt) ++(-1.5pt,-1.5pt) -- ++(0 pt,3.0pt);
\end{tikzpicture}
\caption{Visualization of $w$-weighted degree}
\label{fig:1}
\end{figure}

In the following Theorem, we see that the lattice points contained in the simplex which is given by some weights $w$ have an emerging meaning when we consider the according power sums. 
\begin{thm}\label{thm: generated by p alpha}
Let $w = (w_1,\ldots,w_k)$ be a $k$-tuple of positive integers and let $d\in \N$. The $\R$-algebra generated by all power sums $p_\alpha$ with $|\alpha|_w:=w^T\alpha \leq d$ contains  all $k$-symmetric polynomials of $w$-degree at most $d$.
\end{thm}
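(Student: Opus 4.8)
The plan is to reduce the statement to a standard combinatorial identity expressing a product of power sums in terms of monomial multisymmetric polynomials, keeping careful track of the $w$-degree throughout. First I would note that the $w$-grading on $\rr$ is $S_n$-stable: permuting the rows of $X$ does not change the column index of any variable, hence preserves the $w$-degree of every monomial. Therefore $\rx$ is a $w$-graded subalgebra of $\rr$, and it suffices to treat $w$-homogeneous $k$-symmetric polynomials. Next I would use that $\rx$ is spanned as an $\R$-vector space by the orbit sums $m_\lambda := \sym\!\big(\prod_{i=1}^r X_{i\cdot}^{\alpha^{(i)}}\big)$ attached to finite multisets $\lambda = \{\alpha^{(1)},\dots,\alpha^{(r)}\}$ of nonzero vectors in $\N^k$: an $S_n$-invariant polynomial has constant coefficients along $S_n$-orbits of monomials, and a monomial's orbit is encoded precisely by the multiset of its nonzero exponent rows. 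Since $m_\lambda$ is $w$-homogeneous of $w$-degree $\sum_{i=1}^r |\alpha^{(i)}|_w$, the theorem follows once I show: for every such multiset $\lambda$ with $|\lambda|_w := \sum_i |\alpha^{(i)}|_w = e$, the element $m_\lambda$ lies in the $\R$-subalgebra $A_e$ of $\rx$ generated by all $p_\alpha$ with $|\alpha|_w \le e$ (note $A_e \subseteq A_d$ whenever $e \le d$).

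The engine of the argument is the expansion
\[
p_{\alpha^{(1)}}\cdots p_{\alpha^{(r)}} \;=\; \sum_{j\colon\{1,\dots,r\}\to\{1,\dots,n\}}\ \prod_{i=1}^r X_{j(i)\cdot}^{\alpha^{(i)}} \;=\; c_\lambda\, m_\lambda \;+\; \sum_{\mu} c_{\lambda,\mu}\, m_\mu ,
\]
obtained by grouping the summands according to the set partition of $\{1,\dots,r\}$ recording which indices are sent to the same row: the injective maps contribute $c_\lambda\, m_\lambda$, where $c_\lambda$ is the product of the factorials of the multiplicities of the distinct parts of $\lambda$ and in particular a positive integer, while a partition with $s<r$ blocks contributes to $m_\mu$ for the multiset $\mu$ obtained from $\lambda$ by summing the vectors within each block, so $\mu$ has $s<r$ parts; all coefficients are nonnegative integers. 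The point I would stress is that $|\,\cdot\,|_w = w^T(\cdot)$ is additive, so merging parts of $\lambda$ leaves the $w$-degree unchanged: every $m_\mu$ that occurs is $w$-homogeneous of the same $w$-degree $e$, has strictly fewer parts than $\lambda$, and still has only nonzero parts.

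I would then prove $m_\lambda \in A_e$ by induction on the number of parts $r$ (with $e$ fixed). If $r > n$ then $m_\lambda = 0$ and there is nothing to do; if $r = 1$ then $m_\lambda = p_{\alpha^{(1)}}$ with $|\alpha^{(1)}|_w = e$. For $2 \le r \le n$, solve the displayed identity for $m_\lambda$, which is legitimate since $c_\lambda$ is a positive integer, hence invertible in $\R$:
\[
m_\lambda \;=\; c_\lambda^{-1}\Big( p_{\alpha^{(1)}}\cdots p_{\alpha^{(r)}} \;-\; \sum_{\mu} c_{\lambda,\mu}\, m_\mu \Big).
\]
Here each factor $p_{\alpha^{(i)}}$ belongs to $A_e$, because every $\alpha^{(i)}$ is nonzero and every weight $w_j$ is a positive integer, so $|\alpha^{(i)}|_w \ge 1$ and therefore $|\alpha^{(i)}|_w = e - \sum_{i'\ne i}|\alpha^{(i')}|_w \le e - (r-1) \le e$; and each $m_\mu$ occurring has $w$-degree $e$ and fewer than $r$ parts, hence lies in $A_e$ by the induction hypothesis. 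Thus $m_\lambda \in A_e \subseteq A_d$. Finally, an arbitrary $k$-symmetric polynomial of $w$-degree at most $d$ is an $\R$-linear combination of such $m_\lambda$ with $|\lambda|_w \le d$, so it too lies in $A_d$.

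I expect the only genuinely delicate step to be the degree bookkeeping in the induction: it is exactly the positivity of the weights $w_j$ that forces every individual power sum $p_{\alpha^{(i)}}$ appearing in a product representing $m_\lambda$ to have $w$-degree at most $e$, which is what keeps the induction inside $A_e$. The combinatorial expansion of $p_{\alpha^{(1)}}\cdots p_{\alpha^{(r)}}$ is classical — for $k=1$ it is essentially Newton's identities — and the reduction to orbit sums is routine.
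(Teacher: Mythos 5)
Your proposal is correct and takes essentially the same route as the paper: reduce to the monomial (orbit-sum) multisymmetric functions, induct on the number of exponent tuples, and use the additivity of the $w$-degree to see that every power sum and every monomial function appearing in the recursion has $w$-degree at most $d$. The only cosmetic difference is that you expand the full product $p_{\alpha^{(1)}}\cdots p_{\alpha^{(r)}}$ over set partitions, whereas the paper peels off one power sum at a time via the identity $c\, m_{\alpha^{(0)},\ldots,\alpha^{(\ell)}} = p_{\alpha^{(0)}} m_{\alpha^{(1)},\ldots,\alpha^{(\ell)}} - \sum_{i=1}^{\ell} m_{\alpha^{(1)},\ldots,\alpha^{(i)}+\alpha^{(0)},\ldots,\alpha^{(\ell)}}$.
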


\begin{proof}
Reviewing the proof of \cite[Thm.\,1.2]{dalbec} yields the assertion:

It is enough to show that the first power sums generate all $k$-symmetric monomial functions
\[
{m_{\alpha^{(1)},\ldots,\alpha^{(\ell)}}:=\sym(X_{1\cdot}^{\alpha^{(1)}}\cdots X_{\ell\cdot}^{\alpha^{(\ell)}})}
\]
of $w$-degree at most $d$, where $\ell\in\{1,\ldots,n\}$. For {$\alpha^{(0)},\ldots,\alpha^{(\ell)}\in\N^k$} 
and some {positive} integer $c$ the following equality holds:
\[
{c\cdot m_{\alpha^{(0)},\ldots,\alpha^{(\ell)}}=p_{\alpha^{(0)}}m_{\alpha^{(1)},\ldots,\alpha^{(\ell)}}-\sum\limits_{i=1}^{\ell}m_{\alpha^{(1)},\ldots,\alpha^{(i)}+\alpha^{(0)},\ldots,\alpha^{(\ell)}}.}
\]
Since {$\deg_w(m_{\alpha^{(0)},\ldots,\alpha^{(\ell)}})=\sum_{j=0}^{\ell} |\alpha^{(j)}|_w\leq d$}
, there are only polynomials of $w$-degree equal or less than $d$ on the right-hand side of this identity. Hence, we can write a monomial function with $\ell+1$ exponent tuples as combination of a power sum of $w$-degree at most $d$ and some monomial functions with $\ell$ exponent tuples, both of $w$-degree at most $d$. Noticing that a monomial function with only one exponent tuple is a power sum ends the proof.
\end{proof}

Reconsider Figure~\ref{fig:1}. Taking the lattice points contained in the simplices given by $w_1$ and $w_2$, respectively, we get two sets of power sums, each of them sufficient to describe $f$ as a polynomial expression in its elements. In fact, the proof of Theorem~\ref{thm: generated by p alpha} shows even more.
\begin{remark}
Let $f\in\rx$. For all $k$-tuples $w$ let $d_w:=\deg_w(f)$. Then $f$ can be written as a polynomial expression in the power sums associated to the lattice points that are contained in the intersection $\bigcap\limits_w \{y\in\R_{\geq 0}^k\mid w^Ty\leq d_w\}.$ In particular, for  fixed $d_w$,  one can deduce that the number of coefficients in a representation of such a polynomial will  (for large values of $n$ ) be bounded by a constant. 
\end{remark}
The reason why we are interested in a different way of describing $k$-symmetric polynomials is illuminated by the following observation. For all $\alpha\in\N^k$ the {partial derivative with respect to $X_{ij}$}
of $p_\alpha$ is a polynomial in the variables $X_{i\cdot}$. Moreover, for every $j\in\{1,\ldots,k\}$ there exists one polynomial $r_j\in\R[Y_1,\ldots,Y_k]$ such that
\[
\partial_{ij}p_\alpha=r_j(X_{i\cdot}).
\]
Note that $r_j$ does not depend on $i$. This makes the derivatives of power sums easier to handle than the derivatives of the usual generaing set, that is the set of monomials of $w$-degree at most $d$. 
{We immediately conclude the following result for linear combinations of power sums:
\begin{prop} \label{prop:derivPAlpha}
	Let $w = (w_1,\dots,w_k)$ be a $k$-tuple of positive integers. Denote $N_d:=\{\alpha\in\N^k\mid |\alpha|_w\leq d\}$ and let $u\in\R^{N_d}$. Then there are $k$ polynomials $\tilde{q}_1,\ldots,\tilde{q}_k\in\R[Y_1,\ldots,Y_k]$ such that for all $1 \leq i \leq n$, $1 \leq j \leq k$ we have
\[\partial_{ij}\left(\sum_{\alpha\in N_d}u_\alpha p_\alpha\right)=\tilde{q}_j(X_{i\cdot})\]
and $\tilde q_j$ is of $w$-degree at most $d-w_j$ for each $j\in\{1,\ldots,k\}$.
\end{prop}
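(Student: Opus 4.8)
The plan is to obtain the proposition as a direct consequence of the observation made immediately before it: for a single power sum $p_\alpha=\sum_{i=1}^{n}X_{i\cdot}^\alpha$ one has
\[
\partial_{ij}p_\alpha \;=\; \alpha_j\,X_{i1}^{\alpha_1}\cdots X_{ij}^{\alpha_j-1}\cdots X_{ik}^{\alpha_k} \;=\; r_{j,\alpha}(X_{i\cdot}),
\]
where $r_{j,\alpha}\in\R[Y_1,\dots,Y_k]$ is the monomial $\alpha_j\,Y^{\alpha-e_j}$ (understood as $0$ if $\alpha_j=0$), with $e_j$ the $j$-th standard basis vector; the crucial feature is that $r_{j,\alpha}$ does not depend on the row index $i$. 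Everything else follows by linearity of the derivative.

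Concretely, I would set
\[
\tilde q_j \;:=\; \sum_{\alpha\in N_d}u_\alpha\,r_{j,\alpha} \;=\; \sum_{\alpha\in N_d}u_\alpha\,\alpha_j\,Y^{\alpha-e_j}\;\in\;\R[Y_1,\dots,Y_k],
\]
and then compute, for all $1\le i\le n$ and $1\le j\le k$,
\[
\partial_{ij}\Bigl(\sum_{\alpha\in N_d}u_\alpha p_\alpha\Bigr)\;=\;\sum_{\alpha\in N_d}u_\alpha\,\partial_{ij}p_\alpha\;=\;\sum_{\alpha\in N_d}u_\alpha\,r_{j,\alpha}(X_{i\cdot})\;=\;\tilde q_j(X_{i\cdot}).
\]
Since none of the $r_{j,\alpha}$ involves $i$, the polynomial $\tilde q_j$ is independent of $i$, which is exactly the assertion.

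For the degree bound I would argue that every monomial $Y^{\alpha-e_j}$ occurring in $\tilde q_j$ comes from an index $\alpha\in N_d$, so $|\alpha|_w\le d$, whence $\deg_w(Y^{\alpha-e_j})=w^T(\alpha-e_j)=|\alpha|_w-w_j\le d-w_j$; taking the maximum over these monomials gives $\deg_w(\tilde q_j)\le d-w_j$. (Equivalently, $\partial_{ij}$ lowers the $w$-degree by exactly $w_j$, as $X_{ij}$ is $w$-homogeneous of degree $w_j$.) There is essentially no hard part: the content is entirely contained in the paragraph preceding the proposition, and the only points deserving a moment's care are the bookkeeping of the $w$-degree under the shift $\alpha\mapsto\alpha-e_j$ and the degenerate case $d<w_j$, in which $N_d$ contains no $\alpha$ with $\alpha_j\ge 1$, so $\tilde q_j=0$ and the bound holds vacuously.
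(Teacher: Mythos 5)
Your proof is correct and follows exactly the route the paper intends: the statement is presented there as an immediate consequence of the preceding observation that $\partial_{ij}p_\alpha$ is a polynomial in $X_{i\cdot}$ independent of $i$, and you simply make the linearity step and the $w$-degree bookkeeping ($\deg_w(Y^{\alpha-e_j})=|\alpha|_w-w_j\le d-w_j$) explicit. Nothing is missing; the explicit handling of the degenerate cases ($\alpha_j=0$, $d<w_j$) is a nice touch the paper leaves tacit.
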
 }
We will use this fact in the proof of Theorem~\ref{thm:Main}.

\section{Positivity of multisymmetric polynomials}
\begin{definition}
    For a positive integer $m$ let $A_m$ denote the subset of $\R^{n \times k}$ consisting of all points $x=\left(x_{ij}\right)$ with at most $m$ distinct rows:
    \[A_m := \left\{x \in \R^{n \times k}\mid\#\left\{x_{1\cdot}, \ldots, x_{n\cdot}\right\} \leq m\right\}.\]

    For { $f\in \rr$} {or, more generally, $f \in C^0(V^k)$} 
we define $\kappa(f)$ to be the smallest positive integer such that
    \[\min_{x \in B_r}f(x)=\min_{x\in B_r\cap A_{\kappa(f)}}f(x)\]
    holds for all $r \geq 0$, where $B_r := \{x\in\R^{n\times k}\mid \sum_{i=1}^n\sum_{j=1}^k x_{ij}^2=r\}$.
\end{definition}Note that we always have $\kappa(f)\leq n$ and that by the definition above we see that low values of  $\kappa(f)$ imply that non-negativity of $f$ can be checked on sets of small dimension.  
In particular the above generalizes the previous setup of symmetric polynomials and in this case it is known that $\kappa(f) \leq \max\{2,\left\lfloor\frac{\deg f}{2}\right\rfloor\}$  (see \cite{timofte-2003,Rie}). In Theorem~\ref{thm:Main} below  we will show a bound for a $k$-symmetric polynomial $f\in \rx$ in terms of the (weighted) degree of $f$.
{For technical reasons in the proofs to come we observe that $\kappa$ is lower semi-continuous:

\begin{prop} \label{prop:semiCont}
	Let $(f_{\ell})_{\ell\in\N} \subset C^0(V^k)$ be a sequence of continuous functions converging uniformly on compact sets to some $f \in C^0(V^k)$. Then
    \[\kappa(f) \leq \liminf_{\ell\to\infty} \kappa(f_\ell). \]
\end{prop}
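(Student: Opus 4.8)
The plan is to put $m:=\liminf_{\ell\to\infty}\kappa(f_\ell)$ and to show that $m$ \emph{itself} satisfies the defining property of $\kappa(f)$, namely that $\min_{x\in B_r}f(x)=\min_{x\in B_r\cap A_m}f(x)$ for every $r\ge 0$. Since $\kappa(f)$ is by definition the smallest positive integer with this property, that will give $\kappa(f)\le m$. As a first reduction I would use that $\kappa$ only takes values in $\{1,\dots,n\}$: the liminf is then attained along a subsequence on which $\kappa(f_\ell)$ is eventually equal to $m$, so after discarding finitely many terms and relabelling we may assume $\kappa(f_\ell)=m$ for all $\ell$, while still $f_\ell\to f$ uniformly on compact sets.

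Next I would record a few standing facts. The set $A_m$ is a finite union of linear subspaces of $\R^{n\times k}$ (one for each partition of the $n$ rows into at most $m$ blocks, with rows in a common block forced to coincide), hence closed; therefore $B_r\cap A_m$ is compact, and since $f$ and all $f_\ell$ are continuous, every minimum occurring in the statement is attained. Fix $r\ge 0$. The inclusion $B_r\cap A_m\subseteq B_r$ yields $\min_{B_r}f\le\min_{B_r\cap A_m}f$ for free, so only the reverse inequality has to be proved. Choose $x^\ast\in B_r$ with $f(x^\ast)=\min_{B_r}f$, and for each $\ell$ use $\kappa(f_\ell)=m$ to pick $x_\ell\in B_r\cap A_m$ with $f_\ell(x_\ell)=\min_{B_r\cap A_m}f_\ell=\min_{B_r}f_\ell\le f_\ell(x^\ast)$. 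By compactness of $B_r\cap A_m$ we may, after passing to a further subsequence, assume $x_\ell\to\bar x\in B_r\cap A_m$.

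Now I would pass to the limit in the inequality $f_\ell(x_\ell)\le f_\ell(x^\ast)$. The right-hand side converges to $f(x^\ast)$ by pointwise convergence. For the left-hand side, with $K:=B_r\cap A_m$ a fixed compact set, estimate $|f_\ell(x_\ell)-f(\bar x)|\le\sup_{x\in K}|f_\ell(x)-f(x)|+|f(x_\ell)-f(\bar x)|$: the first term tends to $0$ by uniform convergence on $K$, the second by continuity of $f$ at $\bar x$. Hence $f(\bar x)\le f(x^\ast)=\min_{B_r}f$, so $\min_{B_r\cap A_m}f\le f(\bar x)\le\min_{B_r}f$ and equality holds. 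Since $r\ge 0$ was arbitrary, $\kappa(f)\le m$, as desired.

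The argument is soft; the only step needing genuine care is the final limit, where the point of evaluation $x_\ell$ moves together with the index $\ell$. This is exactly what uniform convergence on compact sets (as opposed to mere pointwise convergence) is needed for, and it is why it matters that $A_m$ is closed, so that $B_r\cap A_m$ is compact and the limit point $\bar x$ stays inside it. The integrality of $\kappa$ is what allows replacing the $\liminf$ by an honest subsequential value at the very start.
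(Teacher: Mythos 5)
Your proof is correct and follows essentially the same route as the paper: pass to a subsequence on which $\kappa(f_\ell)$ equals the (integer) liminf $m$, and use uniform convergence on the compact sets $B_r$ and $B_r\cap A_m$ to carry the equality $\min_{B_r}f_\ell=\min_{B_r\cap A_m}f_\ell$ over to $f$. The paper simply invokes convergence of the minima directly (via the sup-norm estimate), whereas you re-derive it with a minimizing-sequence argument; the content is the same.
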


\begin{proof}
	The sequence $\restr{f_\ell}{B_r}$ 
	converges uniformly to $\restr{f}{B_r}$.
	 This implies that for all $m\in \N$, $\min_{x \in B_r}f_\ell(x)$ and $\min_{x \in B_r \cap A_m} f_\ell(x)$ converge to $\min_{x \in B_r}f(x)$ and $\min_{x \in B_r \cap A_m}f(x)$, respectively.
\end{proof}
}
By definition of $\kappa(f)$ we immediately see:
{\begin{prop}\label{prop:prop}
Let {$f \in \rr$}.
\begin{enumerate}[(i)]
   \item If $f\geq 0$ on $A_{\kappa(f)}$, then $f\geq 0$ on $\R^{n\times k}$.
   \item If $f>0$ on $A_{\kappa(f)}$, then $f>0$ on $\R^{n\times k}$.
   \item If $f\neq 0$ on $A_{\kappa(f)}$, then $f\neq 0$ on $\R^{n\times k}$.
\end{enumerate}
\end{prop}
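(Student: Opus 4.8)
The plan is to read off all three statements directly from the defining property of $\kappa(f)$: for every $r\ge 0$ the minimum of $f$ over the sphere $B_r$ is already attained on $A_{\kappa(f)}$. Parts (i) and (ii) will be the straightforward ones; part (iii) needs one extra topological observation and, I expect, a small amount of care with signs.

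For (i) and (ii): given any $x\in\R^{n\times k}$, put $r:=\sum_{i,j}x_{ij}^2$, so that $x\in B_r$. Since $B_r$ is compact and $f$ is continuous, $f$ attains its minimum on $B_r$, and by the definition of $\kappa(f)$ we have $\min_{B_r}f=\min_{B_r\cap A_{\kappa(f)}}f$. The set $B_r\cap A_{\kappa(f)}$ is nonempty: for $r>0$ it contains the point all of whose rows equal $(\sqrt{r/(nk)},\dots,\sqrt{r/(nk)})$, which lies in $A_1\subseteq A_{\kappa(f)}$ since $\kappa(f)\ge 1$, and for $r=0$ it contains the origin. Hence, if $f\ge 0$ on $A_{\kappa(f)}$,
\[f(x)\ \ge\ \min_{B_r}f\ =\ \min_{B_r\cap A_{\kappa(f)}}f\ \ge\ 0,\]
which proves (i); replacing $\ge$ by $>$ throughout gives (ii).

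For (iii): first note that $A_m$ is the union, over all partitions of $\{1,\dots,n\}$ into at most $m$ blocks, of the linear subspace of $\R^{n\times k}$ on which rows in a common block coincide; each such subspace contains the origin, and $x\in A_m$ implies $tx\in A_m$ for all $t\in[0,1]$, so $A_m$ is star-shaped with respect to $0$, in particular connected. Therefore, if $f$ has no zero on $A_{\kappa(f)}$, continuity forces $f$ to have constant sign there; if that sign is positive, part (ii) gives $f>0$ on $\R^{n\times k}$, so $f\ne 0$ there. The one step I do not expect to be purely formal is the negative case: one would like to re-run the argument for $-f$, but a priori $\kappa(-f)\ne\kappa(f)$ and the hypothesis concerns $A_{\kappa(f)}$, not $A_{\kappa(-f)}$. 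I would look either for the inequality $\kappa(-f)\le\kappa(f)$, which would transfer the hypothesis via $A_{\kappa(-f)}\subseteq A_{\kappa(f)}$ and let (ii) applied to $-f$ conclude, or for an additional hypothesis under which this case does not arise (note that for $f$ homogeneous of positive degree it cannot arise at all, since then $f(0)=0$ with $0\in A_{\kappa(f)}$ makes the hypothesis of (iii) vacuous, so the interesting content of (iii) is precisely the inhomogeneous one). This sign bookkeeping in (iii) is the only place where "immediately from the definition" deserves a closer look; everything else follows from compactness of $B_r$ and connectedness of $A_{\kappa(f)}$.
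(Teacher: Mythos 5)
Your treatment of (i) and (ii) is exactly the intended argument: the paper offers nothing beyond ``immediately from the definition'', and your additions (compactness of $B_r$ and of $B_r\cap A_{\kappa(f)}$, nonemptiness via the diagonal point) are precisely what makes that one-liner rigorous. So for the first two parts you match the paper.

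The scruple you raise about (iii) is not a formality you failed to discharge; it is a genuine gap, and in fact a gap in the statement rather than in your argument. Neither of your proposed repairs is available: $\kappa(-f)\le\kappa(f)$ is false in general, and the ``negative sign'' case does occur for inhomogeneous $f$. Concretely, take $n=2$, $k=1$ and $f=-4X_1X_2-1\in\R[X_1,X_2]^{S_2}$. Minimizing $f$ on $B_r=\{x_1^2+x_2^2=r\}$ means maximizing $x_1x_2$, so $\min_{B_r}f=-2r-1$ is attained exactly at the diagonal points $\pm(\sqrt{r/2},\sqrt{r/2})\in A_1$; hence $\kappa(f)=1$. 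On $A_1$ one has $f(x,x)=-4x^2-1<0$, so $f\neq 0$ on $A_{\kappa(f)}$, yet $f(1/2,-1/2)=0$ (and indeed $\kappa(-f)=2>\kappa(f)$). Thus (iii) cannot be deduced from the minimum-only definition of $\kappa$, because that definition says nothing about where large values of $f$ are attained. The statement that is both true and sufficient for the paper's later use replaces $A_{\kappa(f)}$ by $A_m$ with $m\ge\max\{\kappa(f),\kappa(-f)\}$, and then your own plan closes the proof with no case left open: $A_m$ is star-shaped with respect to $0$, hence connected, so $f$ has constant sign on $A_m$; if the sign is positive, apply (ii) to $f$ (using $A_{\kappa(f)}\subseteq A_m$), and if it is negative, apply (ii) to $-f$ (using $A_{\kappa(-f)}\subseteq A_m$). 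This weakening costs nothing in the applications, since the bound of Theorem~\ref{thm:Main} depends only on the $w$-degree and $\deg_w(-f)=\deg_w(f)$, so it bounds $\kappa(f)$ and $\kappa(-f)$ simultaneously.
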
}

\begin{definition}
Let $n\in\N$. A tuple $\lambda:=(\lambda_1,\ldots,\lambda_\ell)$ of $\ell$ positive integers such that $\lambda_1\geq\lambda_2\geq\ldots\geq \lambda_\ell$ and 
$n=\lambda_1+\ldots+\lambda_\ell$ is called an $\ell$-partition of $n$. We will write $\lambda\vdash_{\ell} n$ to say that $\lambda$ is such a partition.
\end{definition}
The following Proposition gives a rough estimate on the number of $\ell$-partitions for a fixed natural number $n$.
\begin{prop} \label{prop:numberPart}
Let $n\in\N$. Then for every $\ell\in\N$ the number of $\ell$-partitions of $n$ is bounded by $n^\ell$.
\end{prop}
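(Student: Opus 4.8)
The plan is to prove the bound by replacing the (restrictive) set of $\ell$-partitions of $n$ with a much larger but easily counted ambient set. First I would note that if $\lambda = (\lambda_1,\ldots,\lambda_\ell) \vdash_\ell n$, then each part satisfies $1 \le \lambda_i \le n$: indeed the $\lambda_i$ are positive integers and $\lambda_i \le \lambda_1 + \cdots + \lambda_\ell = n$. Hence the tautological assignment $\lambda \mapsto (\lambda_1,\ldots,\lambda_\ell)$ is an injection from the set of $\ell$-partitions of $n$ into the set $\{1,2,\ldots,n\}^{\ell}$ of all $\ell$-tuples of integers between $1$ and $n$.

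Next I would simply count the target set: $\bigl|\{1,\ldots,n\}^{\ell}\bigr| = n^{\ell}$. Since the map above is injective, the number of $\ell$-partitions of $n$ is at most $n^{\ell}$, which is exactly the claim.

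I do not expect any real obstacle here, as the statement is a coarse counting bound and the injection makes it immediate; the only point requiring a word of justification is that every part of an $\ell$-partition of $n$ is bounded above by $n$, which follows from positivity of the remaining parts. If one wanted a sharper estimate, one could instead encode an $\ell$-partition by the weakly decreasing arrangement of a composition of $n$ into $\ell$ positive summands and invoke a stars-and-bars count to obtain the stronger bound $\binom{n-1}{\ell-1}$; but since only $n^{\ell}$ is asserted, the elementary injection above is entirely sufficient.
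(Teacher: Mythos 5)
Your argument is correct: every part of an $\ell$-partition of $n$ lies in $\{1,\dots,n\}$, so the identity map injects the set of $\ell$-partitions into $\{1,\dots,n\}^{\ell}$, giving the bound $n^{\ell}$; the refinement $\binom{n-1}{\ell-1}$ via compositions is also right, though not needed. The paper treats this proposition as evident and gives no proof, so there is nothing to compare against -- your injection is precisely the natural justification one would supply.
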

Algorithmically checking for the global properties in Proposition~\ref{prop:prop} is an instance of a decision problem. As mentioned, it is known that deciding global positivity of a multivariate polynomial is NP-hard in general (see for example \cite{murty}). {Our aim is to exploit the structure of $k$-symmetric polynomials $f$ by bounding $\kappa(f)$.}

Observe that $A_{m}$ is a union of $km$-dimensional subspaces, each of which corresponds to one particular way of assigning the $m$ distinct rows. It follows that modulo the action of $S_n$ on the rows each of these choices is uniquely represented by an $m$-partition of $n$. Therefore, the statements in Proposition~\ref{prop:prop} amount to saying that all of the mentioned global properties can be checked by verifying them on each of the {$k\cdot\kappa(f)$-dimensional} subspaces corresponding to the various $\kappa(f)$-partitions of $n$. The number of such partitions can be bounded by $n^{\kappa(f)}$, so if $\kappa(f)$ can be bounded by a quantity independent of $n$ for a certain family of polynomials $f$, this implies that the complexity of testing for those global properties {for $k$-symmetric polynomials} grows only polynomially in the number of variables -- in  contrast to the general case.

Our main result is now presented in the following Theorem.

\begin{thm}\label{thm:Main}
    Let $w=(w_1,\dots, w_k)$ be a $k$-tuple of positive integers. Let $d \geq \max\{2w_j \mid 1\leq j \leq k\}$ and let $f\in\rx$ be a $k$-symmetric polynomial of $w$-weighted degree at most $d$. Then
    \[\kappa(f) \leq \prod_{j=1}^k \left\lfloor \frac{d}{w_j}\right\rfloor.\]
\end{thm}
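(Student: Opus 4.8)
The plan is to adapt the method of \cite{Rie2} (the symmetric case $k=1$) to the multisymmetric setting, using Proposition~\ref{prop:derivPAlpha} as the key structural input. First I would reduce to the case where $f$ attains its minimum on some sphere $B_r$ at an interior point of a lowest-dimensional stratum. Concretely, fix $r\geq 0$ and let $x\in B_r$ be a point at which $\restr{f}{B_r}$ attains its minimum, chosen so that the number $m$ of distinct rows of $x$ is as small as possible among all minimizers. Grouping the equal rows according to an $m$-partition $\lambda=(\lambda_1,\dots,\lambda_m)\vdash_m n$, the point $x$ lies in the corresponding $km$-dimensional subspace $U_\lambda\subset\R^{n\times k}$, and within $U_\lambda\cap B_r$ it is a local minimum; moreover by minimality of $m$ the distinct rows $v^{(1)},\dots,v^{(m)}\in\R^k$ are pairwise distinct. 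The goal is to show that if $m>\prod_j\lfloor d/w_j\rfloor$, then $x$ can be perturbed within $B_r$ (keeping the value of $f$ constant, or not increasing it) to a point with fewer distinct rows, contradicting minimality of $m$ — which forces $\kappa(f)\leq\prod_j\lfloor d/w_j\rfloor$.

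The core of the argument is a dimension count on the tangent directions. By Theorem~\ref{thm: generated by p alpha}, $f$ is a polynomial in the power sums $p_\alpha$ with $|\alpha|_w\leq d$; write $f=g(\dots,p_\alpha,\dots)$. I would consider perturbations of $x$ that move the distinct row-vectors $v^{(1)},\dots,v^{(m)}$ along directions $\delta^{(1)},\dots,\delta^{(m)}\in\R^k$ while preserving the block structure of $\lambda$. Along such a perturbation, the chain rule together with Proposition~\ref{prop:derivPAlpha} shows that the first-order change of \emph{any} linear combination $\sum u_\alpha p_\alpha$ (in particular of each $p_\alpha$ appearing in $f$, and of the sphere constraint $p_{2e_1}+\dots+p_{2e_k}$, where $e_j$ is the $j$-th unit vector) depends on the perturbation only through the numbers $\sum_{j} w_j\,?$ — more precisely, the relevant quantities are the values $\tilde q_\ell(v^{(t)})$ paired against $\lambda_t\delta^{(t)}_\ell$, summed over the blocks. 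Writing $\beta_t:=\lambda_t\delta^{(t)}\in\R^k$ for the effective perturbation of block $t$, the first-order variation of $p_\alpha$ is a linear functional in the $(\beta_1,\dots,\beta_m)\in\R^{km}$, and this functional factors through the map sending $(\beta_1,\dots,\beta_m)$ to the collection of moments $\bigl(\sum_{t=1}^m (v^{(t)})^{\gamma}\cdot\langle\text{something},\beta_t\rangle\bigr)$. The upshot, exactly as in \cite{Rie2}, is that all first (and if needed second) derivatives of $f$ and of the constraint, restricted to these perturbations, are controlled by the evaluations of monomials $Y^\gamma$ with $|\gamma|_w\le d-w_j$ at the points $v^{(t)}$; the number of such monomials for the $j$-th coordinate direction is at most $\lfloor d/w_j\rfloor$ counted appropriately, and across all $k$ directions the total number of independent constraints on the perturbation is at most $\prod_{j=1}^k\lfloor d/w_j\rfloor$.

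Hence if $m>\prod_{j=1}^k\lfloor d/w_j\rfloor$, the linear system cutting out the ``gradient-and-constraint-preserving'' perturbations is underdetermined: there is a nonzero $(\beta_1,\dots,\beta_m)$ along which, to first order, $f$ is stationary and $x$ stays on $B_r$. One then either runs the perturbation until two of the moving points $v^{(t)}$ collide (using that $f$ restricted to the resulting curve is, to first order, nonincreasing, so one can push to the boundary of the chamber where rows coincide without increasing $f$ — a compactness/continuity argument on the curve $t\mapsto x(t)$ constrained to $B_r$), or, in the degenerate case where the curve never forces a collision, uses a second-order argument along the kernel direction combined with the hypothesis $d\geq 2w_j$ (which guarantees that the quadratic terms are still captured, so that $f$ is actually constant along an affine family and one can slide to a point with a repeated row). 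Either way one produces a minimizer with fewer than $m$ distinct rows, the desired contradiction. The step I expect to be the main obstacle is making precise the dimension count: correctly identifying which monomial evaluations $Y^\gamma(v^{(t)})$ actually parametrize the space of constraints on the perturbations and checking that their number is bounded by $\prod_{j=1}^k\lfloor d/w_j\rfloor$ rather than something larger — together with the bookkeeping needed to handle the sphere constraint and, in the boundary/second-order case, to ensure the perturbation can genuinely be iterated down to a collision without the value of $f$ going up. The hypothesis $d\ge\max_j 2w_j$ should be exactly what is needed to make the $k=1$-style argument of \cite{Rie2} go through in each coordinate direction, and the product structure $\prod_j\lfloor d/w_j\rfloor$ reflects that the $k$ directions impose constraints essentially independently.
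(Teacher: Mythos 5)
There is a genuine gap, and it sits exactly where you predicted: the dimension count. In your first-order/kernel argument the constraints on a perturbation $(\beta_1,\dots,\beta_m)\in\R^{km}$ coming from the power sums $p_\alpha$ with $|\alpha|_w\le d$ (and from the sphere) are linear conditions, and linear conditions \emph{add}; you would get a nonzero kernel direction as soon as $km$ exceeds the number of such power sums (lattice points of the simplex), not when $m$ exceeds the product $\prod_j\lfloor d/w_j\rfloor$. There is no mechanism in your setup that makes the $k$ coordinate directions contribute multiplicatively; the phrase ``across all $k$ directions the total number of independent constraints is at most $\prod_j\lfloor d/w_j\rfloor$'' is not a count that linear algebra will deliver. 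In the paper the product arises from a completely different, multiplicative mechanism: after adding the perturbation $f_\varepsilon=f+\varepsilon\sum_{i,j}X_{ij}^{d_j+1}$ with $d_j=\lfloor d/w_j\rfloor$ (legitimate by the lower semicontinuity of $\kappa$, Proposition~\ref{prop:semiCont}), one applies the Lagrange multiplier rule at a minimizer $z^*$ of $\restr{f_\varepsilon}{B_r}$ and uses Theorem~\ref{thm: generated by p alpha} and Proposition~\ref{prop:derivPAlpha} to rewrite the stationarity conditions row-wise as $q_j(z^*_{i\cdot})=0$ with $q_j=\varepsilon(d_j+1)Y_j^{d_j}+\tilde q_j$ and $\deg_w\tilde q_j\le d-w_j<w_jd_j$. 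The leading terms $Y_j^{d_j}$ force $\dim_\C \C[Y_1,\dots,Y_k]/(q_1,\dots,q_k)\le\prod_j d_j$, so this $k\times k$ system has at most $\prod_j d_j$ common zeros, hence at most that many distinct rows of $z^*$. That B\'ezout-type count of common zeros is the missing idea in your proposal, and it is also what the $\varepsilon$-perturbation is for (it guarantees the leading monomials, hence zero-dimensionality); your hypothesis $d\ge 2w_j$ is only needed so that $d_j\ge 2$, not for any second-order analysis.

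The second gap is the ``slide until collision'' step. First-order stationarity of $f$ along a single kernel direction at the point $x$ gives no control along a curve: you cannot ``run the perturbation'' keeping $f$ nonincreasing and the sphere constraint exact from infinitesimal information at one point. In the symmetric case $k=1$ such fiber-deformation arguments can be made to work because the power sums $p_1,\dots,p_n$ form a coordinate system and the fibers of truncated power-sum maps are well understood; for $k\ge 2$ the multisymmetric power sums are algebraically dependent and the invariant ring is not polynomial, so neither the existence of an exact (not merely first-order) value-preserving curve nor the claim that it can be continued until two rows of $x$ collide is justified by what you wrote. The paper's Lagrange-multiplier route avoids this entirely: it never deforms the minimizer, it only shows that any minimizer of the perturbed function already lies in $A_\mu$, and then lets $\varepsilon\to 0$.
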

Before we give the proof of Theorem \ref{thm:Main} in detail, we shortly outline the main idea. The proof relies essentially on the  classical Lagrange multiplier rule. For every $r>0$ we minimize the given multisymmetric function on a sphere of radius $r$. Since every  sphere is smooth we can infer the existence of Lagrange multipliers. Using a perturbation we can guarantee that the system of polynomial equations, which certify this existence, describe a zero-dimensional variety. Finally, the bound on $\kappa(f)$ given in the Theorem above follows by analysing the finitely many solutions. 

\begin{proof}
Set $d_j := \left\lfloor d/w_j\right\rfloor$ and $\mu := \prod_{j=1}^k d_j$. Define $g:=\sum_{i=1}^n\sum_{j=1}^k X_{ij}^{d_j+1}$. By Proposition~\ref{prop:semiCont} it suffices to show that $\kappa(f_\varepsilon) \leq \mu$ for all $\varepsilon > 0$, where $f_\varepsilon := f + \varepsilon g$.

Fix $r > 0$ and let $\varepsilon > 0$. Consider a point $z^* \in B_r$ where $\restr{f_\varepsilon}{B_r}$ is minimized. We have to show $z^* \in A_\mu$.

Henceforth, we denote $N_d:=\{\alpha\in\N^k\mid |\alpha|_w\leq d\}$ and $I_{n,k}:=\{1,\ldots,n\}\times\{1,\ldots,k\}$. Define $p:=\sum_{i=1}^n\sum_{j=1}^k X_{ij}^{2}$ and denote $$\nabla:\rr\to\rr^{I_{n,k}}, h\mapsto \left(\partial_{11}h,\ldots,\partial_{nk}h \right).$$
Since $z^*$ is a minimum point of $f_\varepsilon$ on {$B_r=\{x\in \R^{n\times k} \mid p(x) = r\}$} and $\nabla p(z^*)$ is not zero, there exists a Lagrange multiplier $\lambda\in\R$ such that
\begin{equation}\label{eq: lagrange1}
	\nabla f_\varepsilon(z^*)+\lambda\nabla p(z^*)=0.
\end{equation}
Since $f,p\in\rx$, by Theorem~\ref{thm: generated by p alpha} there are polynomials $F,P\in\R[(Z_\alpha)_{\alpha\in N_d}]$ such that $f=F((p_\alpha)_{\alpha\in N_d})$ and $p=P((p_\alpha)_{\alpha\in N_d})$. We define the matrix polynomial  $M\in\rr^{N_d\times I_{n,k}}$ by
\[
M_{\alpha,(i,j)}:=\partial_{ij}p_\alpha.
\]
Let $c\in\R^{N_d}$ be such that $c_\alpha:=p_\alpha(z^*)$. Then by the chain.rule we can write \eqref{eq: lagrange1} as
\[
\varepsilon\nabla g(z^*)+\left(\nabla F(c) + \lambda\nabla P(c)\right)M(z^*)=0.
\]
In other words, if we set $u := \left(\nabla F(c) + \lambda\nabla P(c)\right)^T \in\R^{N_d}$, then $z^*$ is a solution to the system of polynomial equations
\begin{equation} \label{eq: lagrange2}
\varepsilon\nabla g(X)+u^TM(X)=0.
\end{equation}

By Proposition~\ref{prop:derivPAlpha} there are polynomials $\tilde q_1, \dots, \tilde q_k \in \R[Y_1,\dots, Y_k]$ such that the \mbox{$(i,j)$-th} entry of $u^TM(X)$ is $\tilde q_j(X_{i\cdot})$ and $\deg_w(\tilde q_j) \leq d-w_j$. Thus, \eqref{eq: lagrange2} can be rewritten as $q_j(X_{i\cdot}) = 0$ for all $(i,j)\in I_{n,k}$, where 
\[
q_j:=\varepsilon(d_j+1)Y_j^{d_j}+\tilde{q_j}\in\R[Y_1,\ldots,Y_k]\quad\mbox{for all}\;j\in\{1,\ldots,k\}.
\]
In other words, each row $z_{1\cdot}^*,\dots,z_{n\cdot}^*$ of $z^*$ is a point of the complex zero set \[\mathcal{V}(q_1,\ldots,q_k) = \{y\in \C^k \mid q_1(y) =0,\ldots, q_k(y)= 0\} \subset \C^k.\]

However, the coordinate ring {$\C[Y_1,\ldots,Y_k]/(q_1,\ldots,q_k)$} is a $\C$-vector space of dimension at most $\prod_{j=1}^k d_j = \mu$. Indeed, because of {$\deg_w(Y_j^{d_j}) > \deg_w(\tilde q_j)$}
{ all monomials $Y^\alpha$ with $\alpha_j > d_j$ for some $1\leq j\leq k$ can be rewritten as a sum of monomials of smaller $w$-degree.}
Hence, $\#\mathcal{V}(q_1,\ldots,q_k)\leq \mu$, so at most $\mu$ of the rows $z_{1\cdot}^*,\ldots,z_{n\cdot}^*$ can be different, that is, $z^*\in A_\mu$.
\end{proof}

{
\begin{remark}
	Note that the proof of Theorem \ref{thm:Main} given above works in exactly the same manner for a more general setting: If $f\in C^1(V^k)$ and there exists $F \in C^1(\R^{N_d})$ such that $f = F((p_\alpha)_{\alpha\in N_d})$, then we get the same bound for $\kappa(f)$. Additionally, we can further extend this bound by Proposition~\ref{prop:semiCont} to functions of the form $f = F((p_\alpha)_{\alpha\in N_d})$ with $F \in C^0(\R^{N_d})$. 
\end{remark}

\begin{ex}
	We consider the case $k=2$. The family of polynomials $$f_1^{m,\ell} := p_{(2,1)}^m p_{(1,1)}^2 - p_{(0,3)}^\ell \quad (m,\ell \in \N)$$ is not bounded with respect to any weighted degree. However, all $f_1^{m,\ell}$ can be written in terms of the power sum polynomials of (usual) degree at most 3. Thus, by the preceding remark, we still get the bound $\kappa(f_1^{m, \ell}) \leq 9$ for all $m, \ell \in \N$. Even for the rational function $f_2 := (p_{(3,0)}-2p_{(1,2)}p_{(2,1)})/(p_{(2,0)}^2 + 1)$ and the functions $f_3^m(x) := |\exp(f_2(x))-2|+f_1^{m,m}(x)$ ($m \in \N$) we get that the $\kappa$-value is at most 9.
\end{ex}
}

\begin{remark}
Note that in the proof of Theorem~\ref{thm:Main} the bound for $\kappa(f)$ is given by an upper bound on the number of complex solutions of a certain system of polynomial equations. Since in fact we are only interested in the number of real solutions, {$\mu$ might be chosen significantly smaller depending on the specific representation $F$. If, for instance, $f$ is a polynomial with only few monomials but of high degree, 
}
it could be advantageous  to argue with  Khovanskii's fewnomial bound. This result bounds  the number of isolated real solutions of a polynomial system by a function of the number of distinct monomials which are involved rather than by the a function of the degree (see e.g., \cite[Theorem 3.7]{sturmfels}).
\end{remark}

For a very rough estimation of $\kappa(f)$ we can just use the usual degree and get the following upper bound without any effort.

\begin{cor}\label{cor:dk}
	If $f\in\rx$ is a $k$-symmetric polynomial of degree $d\geq 2$, then 
    \[\kappa(f) \leq d^k.\]
\end{cor}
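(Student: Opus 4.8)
The plan is to obtain this corollary as an immediate specialization of Theorem~\ref{thm:Main} to the trivial weight vector. I would take $w = (1,\dots,1) \in \N^k$. As noted right after Definition~\ref{def:wDeg}, setting all weights equal to $1$ makes the $w$-weighted degree agree with the ordinary degree, so $\deg_w(f) = \deg(f) = d$ in this case.

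Next I would verify the hypothesis of Theorem~\ref{thm:Main}: one needs $d \geq \max\{2w_j \mid 1\leq j\leq k\}$, and with $w=(1,\dots,1)$ this is precisely the assumption $d \geq 2$ of the corollary. Hence Theorem~\ref{thm:Main} applies verbatim and yields
\[
\kappa(f) \;\leq\; \prod_{j=1}^k \left\lfloor \frac{d}{w_j}\right\rfloor \;=\; \prod_{j=1}^k \lfloor d\rfloor \;=\; d^k,
\]
which is the claimed bound.

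There is no real obstacle here; the only point that needs a moment's attention is matching the degree condition $d\geq 2$ in the statement of the corollary with the condition $d \geq \max\{2w_j\}$ of Theorem~\ref{thm:Main}, which becomes trivial once all weights are $1$. I would also remark that $d^k$ is the crudest bound derivable from Theorem~\ref{thm:Main}: for a specific $f$ a more careful choice of weights adapted to the exponent set $E_f$ (as illustrated in Example~\ref{ex:1}) will in general give a substantially smaller value of $\kappa(f)$.
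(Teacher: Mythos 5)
Your proposal is correct and is exactly the argument the paper intends: the corollary is the specialization of Theorem~\ref{thm:Main} to the weight vector $w=(1,\dots,1)$, where the $w$-degree is the usual degree and the hypothesis $d\geq\max\{2w_j\}$ reduces to $d\geq 2$. Nothing further is needed.
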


However, Theorem~\ref{thm:Main} is much stronger. Recall that, using Notation~\ref{not:morphismus,Mf,Ef}, the condition that $f$ is of $w$-degree at most $d$ can be expressed as $E_f \subset \{y\in\R_{\geq 0}^k\mid w^Ty\leq d\}$ and note that the hyperplane $\{y\in\R^k\mid w^Ty= d\}$ defining this enclosing simplex intersects the $j$-th coordinate axis at the point {$(d/w_j) e_j$} (where $e_1,\dots, e_k$ is the standard basis of $\R^k$). This gives the following geometric reformulation of the Theorem ~\ref{thm:Main}.

\begin{thm} \label{thm:reformulation}
	Let $f\in\rx$ be a $k$-symmetric polynomial and let $a_1,\ldots,a_k \in \Q_{\geq 2}$ such that $E_f$ is contained in the simplex $$\Delta(a_1,\dots,a_k) := {\rm conv}(0,a_1e_1,\dots,a_ke_k) {\subset \R^k}.$$ Then $\kappa(f)$ is bounded by {$\prod_{j=1}^k \lfloor a_j \rfloor$, i.e.} the number of lattice points in $[0,a_1-1]\times\dots\times[0,a_k-1]$.
\end{thm}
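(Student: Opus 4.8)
The plan is to deduce Theorem~\ref{thm:reformulation} directly from Theorem~\ref{thm:Main} by clearing denominators in the weight vector. Given rational numbers $a_1,\dots,a_k \in \Q_{\geq 2}$ with $E_f \subset \Delta(a_1,\dots,a_k)$, write each $a_j = d/w_j$ for suitable positive integers $w_j$ and a common value $d$: concretely, pick $d$ to be a common multiple of the denominators of the $a_j$ (so that each $d/a_j$ is a positive integer) and set $w_j := d/a_j \in \N_{>0}$. Then $w^T y \leq d$ is exactly the inequality cutting out $\Delta(a_1,\dots,a_k)$, since the facet hyperplane $\{w^Ty = d\}$ meets the $j$-th axis at $(d/w_j)e_j = a_je_j$. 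By Notation~\ref{not:morphismus,Mf,Ef}, the containment $E_f \subset \{y \in \R_{\geq 0}^k \mid w^Ty \leq d\}$ says precisely that $\deg_w(f) \leq d$.

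Next I would check the hypothesis $d \geq \max\{2w_j\}$ of Theorem~\ref{thm:Main}. This is equivalent to $d/w_j = a_j \geq 2$ for every $j$, which holds by assumption $a_j \in \Q_{\geq 2}$. Hence Theorem~\ref{thm:Main} applies and gives $\kappa(f) \leq \prod_{j=1}^k \lfloor d/w_j \rfloor = \prod_{j=1}^k \lfloor a_j \rfloor$. Finally, $\prod_{j=1}^k \lfloor a_j \rfloor$ equals the number of integer points in the box $[0,a_1-1]\times\dots\times[0,a_k-1]$: in each coordinate the integers in $[0,a_j-1]$ are $0,1,\dots,\lfloor a_j - 1\rfloor = \lfloor a_j \rfloor - 1$, giving $\lfloor a_j\rfloor$ choices, and the count multiplies over coordinates. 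This last point is an elementary lattice-point count; the only mild subtlety is noting $\lfloor a_j - 1 \rfloor = \lfloor a_j\rfloor - 1$, which holds for all real $a_j$.

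I do not anticipate any real obstacle: the statement is essentially a repackaging of Theorem~\ref{thm:Main} with the weight vector presented via its associated simplex rather than via integer weights, so the content is the observation that an arbitrary rational simplex $\Delta(a_1,\dots,a_k)$ with vertices on the axes arises from integer weights $w_j = d/a_j$ after choosing $d$ divisible enough. The one thing to be careful about is that the bound $\prod \lfloor d/w_j\rfloor$ is independent of which common multiple $d$ one picks — this is automatic since $\lfloor d/w_j \rfloor = \lfloor a_j \rfloor$ regardless of $d$ — so the reformulation is well-posed.

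\begin{proof}
Since each $a_j \in \Q_{\geq 2}$, choose a positive integer $d$ that is a common multiple of the denominators of $a_1,\dots,a_k$, so that $w_j := d/a_j$ is a positive integer for each $j$. Set $w := (w_1,\dots,w_k)$. The hyperplane $\{y\in\R^k \mid w^Ty = d\}$ meets the $j$-th coordinate axis at $(d/w_j)e_j = a_j e_j$, so
\[
\{y\in\R_{\geq 0}^k \mid w^Ty \leq d\} = {\rm conv}(0,a_1e_1,\dots,a_ke_k) = \Delta(a_1,\dots,a_k).
\]
By hypothesis $E_f \subset \Delta(a_1,\dots,a_k)$, so by Notation~\ref{not:morphismus,Mf,Ef} we have $\deg_w(f) \leq d$. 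Moreover $d/w_j = a_j \geq 2$ for every $j$, hence $d \geq \max\{2w_j \mid 1\leq j\leq k\}$, and Theorem~\ref{thm:Main} applies to give
\[
\kappa(f) \leq \prod_{j=1}^k \left\lfloor \frac{d}{w_j}\right\rfloor = \prod_{j=1}^k \lfloor a_j\rfloor.
\]
Finally, for each $j$ the integers lying in $[0,a_j-1]$ are $0,1,\dots,\lfloor a_j-1\rfloor = \lfloor a_j\rfloor - 1$, so there are exactly $\lfloor a_j\rfloor$ of them; multiplying over $j=1,\dots,k$ shows that $\prod_{j=1}^k \lfloor a_j\rfloor$ is the number of lattice points in $[0,a_1-1]\times\dots\times[0,a_k-1]$.
\end{proof}
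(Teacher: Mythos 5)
Your argument is correct and is exactly the paper's (implicit) proof: the paper derives Theorem~\ref{thm:reformulation} from Theorem~\ref{thm:Main} by the same observation that $E_f\subset\Delta(a_1,\dots,a_k)$ is equivalent to $\deg_w(f)\leq d$ for integer weights with $d/w_j=a_j$, and that $a_j\geq 2$ is exactly the hypothesis $d\geq 2w_j$. One small slip: to make $w_j=d/a_j$ a positive integer you must take $d$ to be a common multiple of the \emph{numerators} of the $a_j$ (in lowest terms), not of their denominators (e.g.\ $a_1=5/2$ with $d=2$ gives $d/a_1=4/5$); since such a $d$ obviously exists, this is trivially repaired and does not affect the proof.
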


Note that this number of lattice points is approximately $k!\,{\rm vol}(\Delta(a_1,\dots,a_k))$. Therefore, finding a good bound for $\kappa(f)$ via Theorem~\ref{thm:reformulation} roughly amounts to finding the smallest simplex $\Delta(a_1,\dots,a_k)$ enclosing $E_f$.

\begin{ex}[Example~\ref{ex:1} continued]
Reconsider Figure~\ref{fig:1}. The triangle described by {$w^{(2)}=(3,5)$} minimizes the number of lattice points. Counting the lattice points in the rectangle drawn in Figure~\ref{fig:2} gives the upper bound $\kappa(f)\leq 8.$ This bound holds for all $n$ and for all choices of the parameters in Example~\ref{ex:1}. 
\begin{figure}
\begin{tikzpicture}[line cap=round,line join=round,>=triangle 45,x=1.2cm,y=1.2cm]
\draw[->,color=black] (-0.6,0.0) -- (7.6,0.0);
\foreach \x in {,1,2,3,4,5,6,7}
\draw[shift={(\x,0)},color=black] (0pt,2pt) -- (0pt,-2pt) node[below] {\footnotesize $\x$};
\draw[->,color=black] (0.0,-0.75) -- (0.0,3.75);
\foreach \y in {,1,2,3}
\draw[shift={(0,\y)},color=black] (2pt,0pt) -- (-2pt,0pt) node[left] {\footnotesize $\y$};
\draw[color=black] (0pt,-10pt) node[right] {\footnotesize $0$};
\clip(-0.6,-0.75) rectangle (7.6,3.75);

\draw [domain=-0.6:7.6] plot(\x,{-0.6*\x+2.8});

\fill[line width=0.0pt,fill=black,fill opacity=0.1] (-0.0,1.8) -- (3.6666,1.8) -- (3.6666,-0.0) -- (-0.0,-0.0) -- cycle;

\draw [color=black] (0.0,0.0)-- ++(-1.5pt,0 pt) -- ++(3.0pt,0 pt) ++(-1.5pt,-1.5pt) -- ++(0 pt,3.0pt);
\draw [color=black] (1.0,0.0)-- ++(-1.5pt,0 pt) -- ++(3.0pt,0 pt) ++(-1.5pt,-1.5pt) -- ++(0 pt,3.0pt);
\draw [fill=black] (2.0,0.0) circle (2.5pt);
\draw [color=black] (3.0,0.0)-- ++(-1.5pt,0 pt) -- ++(3.0pt,0 pt) ++(-1.5pt,-1.5pt) -- ++(0 pt,3.0pt);
\draw [fill=black] (4.0,0.0) circle (2.5pt);
\draw [color=black] (5.0,0.0)-- ++(-1.5pt,0 pt) -- ++(3.0pt,0 pt) ++(-1.5pt,-1.5pt) -- ++(0 pt,3.0pt);
\draw [color=black] (6.0,0.0)-- ++(-1.5pt,0 pt) -- ++(3.0pt,0 pt) ++(-1.5pt,-1.5pt) -- ++(0 pt,3.0pt);
\draw [color=black] (7.0,0.0)-- ++(-1.5pt,0 pt) -- ++(3.0pt,0 pt) ++(-1.5pt,-1.5pt) -- ++(0 pt,3.0pt);
\draw [color=black] (0.0,1.0)-- ++(-1.5pt,0 pt) -- ++(3.0pt,0 pt) ++(-1.5pt,-1.5pt) -- ++(0 pt,3.0pt);
\draw [fill=black] (1.0,1.0) circle (2.5pt);
\draw [color=black] (2.0,1.0)-- ++(-1.5pt,0 pt) -- ++(3.0pt,0 pt) ++(-1.5pt,-1.5pt) -- ++(0 pt,3.0pt);
\draw [fill=black] (3.0,1.0) circle (2.5pt);
\draw [color=black] (4.0,1.0)-- ++(-1.5pt,0 pt) -- ++(3.0pt,0 pt) ++(-1.5pt,-1.5pt) -- ++(0 pt,3.0pt);
\draw [color=black] (5.0,1.0)-- ++(-1.5pt,0 pt) -- ++(3.0pt,0 pt) ++(-1.5pt,-1.5pt) -- ++(0 pt,3.0pt);
\draw [color=black] (6.0,1.0)-- ++(-1.5pt,0 pt) -- ++(3.0pt,0 pt) ++(-1.5pt,-1.5pt) -- ++(0 pt,3.0pt);
\draw [color=black] (7.0,1.0)-- ++(-1.5pt,0 pt) -- ++(3.0pt,0 pt) ++(-1.5pt,-1.5pt) -- ++(0 pt,3.0pt);
\draw [color=black] (0.0,2.0)-- ++(-1.5pt,0 pt) -- ++(3.0pt,0 pt) ++(-1.5pt,-1.5pt) -- ++(0 pt,3.0pt);
\draw [fill=black] (1.0,2.0) circle (2.5pt);
\draw [color=black] (2.0,2.0)-- ++(-1.5pt,0 pt) -- ++(3.0pt,0 pt) ++(-1.5pt,-1.5pt) -- ++(0 pt,3.0pt);
\draw [color=black] (3.0,2.0)-- ++(-1.5pt,0 pt) -- ++(3.0pt,0 pt) ++(-1.5pt,-1.5pt) -- ++(0 pt,3.0pt);
\draw [color=black] (4.0,2.0)-- ++(-1.5pt,0 pt) -- ++(3.0pt,0 pt) ++(-1.5pt,-1.5pt) -- ++(0 pt,3.0pt);
\draw [color=black] (5.0,2.0)-- ++(-1.5pt,0 pt) -- ++(3.0pt,0 pt) ++(-1.5pt,-1.5pt) -- ++(0 pt,3.0pt);
\draw [color=black] (6.0,2.0)-- ++(-1.5pt,0 pt) -- ++(3.0pt,0 pt) ++(-1.5pt,-1.5pt) -- ++(0 pt,3.0pt);
\draw [color=black] (7.0,2.0)-- ++(-1.5pt,0 pt) -- ++(3.0pt,0 pt) ++(-1.5pt,-1.5pt) -- ++(0 pt,3.0pt);
\draw [color=black] (0.0,3.0)-- ++(-1.5pt,0 pt) -- ++(3.0pt,0 pt) ++(-1.5pt,-1.5pt) -- ++(0 pt,3.0pt);
\draw [color=black] (1.0,3.0)-- ++(-1.5pt,0 pt) -- ++(3.0pt,0 pt) ++(-1.5pt,-1.5pt) -- ++(0 pt,3.0pt);
\draw [color=black] (2.0,3.0)-- ++(-1.5pt,0 pt) -- ++(3.0pt,0 pt) ++(-1.5pt,-1.5pt) -- ++(0 pt,3.0pt);
\draw [color=black] (3.0,3.0)-- ++(-1.5pt,0 pt) -- ++(3.0pt,0 pt) ++(-1.5pt,-1.5pt) -- ++(0 pt,3.0pt);
\draw [color=black] (4.0,3.0)-- ++(-1.5pt,0 pt) -- ++(3.0pt,0 pt) ++(-1.5pt,-1.5pt) -- ++(0 pt,3.0pt);
\draw [color=black] (5.0,3.0)-- ++(-1.5pt,0 pt) -- ++(3.0pt,0 pt) ++(-1.5pt,-1.5pt) -- ++(0 pt,3.0pt);
\draw [color=black] (6.0,3.0)-- ++(-1.5pt,0 pt) -- ++(3.0pt,0 pt) ++(-1.5pt,-1.5pt) -- ++(0 pt,3.0pt);
\draw [color=black] (7.0,3.0)-- ++(-1.5pt,0 pt) -- ++(3.0pt,0 pt) ++(-1.5pt,-1.5pt) -- ++(0 pt,3.0pt);
\end{tikzpicture}
\caption{Visualization of Theorem~\ref{thm:reformulation}}
\label{fig:2}
\end{figure}
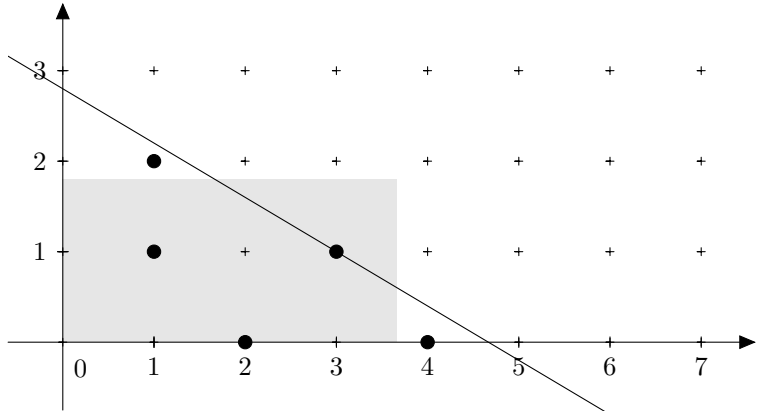
\end{ex}
In the case $k>2$ drawing a picture and fitting the right simplex might not be that easy. However, we can provide a further bound on $\kappa(f)$. Note that this bound is better than the bound given in Corollary~\ref{cor:dk} in the case that the degree of some {columns} $X_{\cdot j}$ is much smaller than the others.
\begin{cor}\label{cor:kk prod d}
	Let $k\geq 2$ and let $f\in\rx$ be a $k$-symmetric polynomial such that each column $X_{\cdot j}$ occurs only with degree  at most $d_j \geq 1$. Then
    \[\kappa(f) \leq k^k \prod_{j=1}^{k} d_j.\]
\end{cor}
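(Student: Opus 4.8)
The plan is to deduce this as a direct special case of Theorem~\ref{thm:Main} by choosing suitable weights. The hypothesis that each column $X_{\cdot j}$ occurs only with degree at most $d_j$ means, in the language of the exponent set $E_f$ from Notation~\ref{not:morphismus,Mf,Ef}, that $E_f \subseteq [0,d_1]\times\cdots\times[0,d_k]$. So the task reduces to finding a $k$-tuple $w$ of positive integers and a bound $d\geq\max_j 2w_j$ with $\deg_w(f)\leq d$ for which $\prod_{j=1}^k\lfloor d/w_j\rfloor$ equals the claimed quantity.

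First I would set $w_j := \prod_{i\neq j} d_i$ for each $j$ and $d := k\prod_{i=1}^k d_i$. These are positive integers since each $d_i\geq 1$ is a positive integer. For every exponent tuple $\alpha\in E_f$ we have $\alpha_j\leq d_j$, hence
\[
w^T\alpha \;=\; \sum_{j=1}^k\Bigl(\prod_{i\neq j}d_i\Bigr)\alpha_j \;\leq\; \sum_{j=1}^k\prod_{i=1}^k d_i \;=\; k\prod_{i=1}^k d_i \;=\; d,
\]
so $\deg_w(f)\leq d$. Next I would verify the standing hypothesis of Theorem~\ref{thm:Main}: since $d/w_j = kd_j$ and $d_j\geq 1$, $k\geq 2$, we get $d\geq 2w_j$ for all $j$. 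Finally, because $d/w_j = kd_j$ is an integer, $\lfloor d/w_j\rfloor = kd_j$, and Theorem~\ref{thm:Main} yields $\kappa(f)\leq\prod_{j=1}^k kd_j = k^k\prod_{j=1}^k d_j$.

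There is essentially no obstacle here; the only real decision is the choice of weights, and the natural guess $w_j=\prod_{i\neq j}d_i$ works immediately. Equivalently, one may argue through Theorem~\ref{thm:reformulation} by taking $a_j := kd_j\in\Q_{\geq 2}$ and observing $\sum_{j=1}^k d_j/a_j = 1$, so that the far corner $(d_1,\dots,d_k)$ of the box lies in $\Delta(a_1,\dots,a_k)$, hence $E_f\subseteq\Delta(a_1,\dots,a_k)$, and the number of lattice points in $[0,a_1-1]\times\cdots\times[0,a_k-1]$ is $\prod_{j=1}^k kd_j = k^k\prod_{j=1}^k d_j$.
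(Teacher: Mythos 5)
Your argument is correct and is essentially the paper's proof: the paper also observes $E_f\subset[0,d_1]\times\dots\times[0,d_k]\subset\Delta(kd_1,\dots,kd_k)$ and applies Theorem~\ref{thm:reformulation}, which is exactly your second route, while your primary route via Theorem~\ref{thm:Main} with weights $w_j=\prod_{i\neq j}d_i$ and $d=k\prod_i d_i$ is just the same argument translated into weighted-degree language.
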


\begin{proof}
Note that $E_f\subset[0,d_1]\times\dots\times[0,d_k] \subset \Delta(kd_1,\dots,kd_k)$ and use Theorem~\ref{thm:reformulation} {to deduce the claim}.
\end{proof}

\section{Deciding Convexity of multisymmetric polynomials}

In this section we apply {Theorem~\ref{thm:reformulation}} to the problem of algorithmically deciding convexity of a polynomial of fixed degree. Already for general quartic (i.e. degree 4) polynomials deciding convexity is an NP-hard problem (see \cite{amir}). However, we will see that in the case of symmetric (or, more generally, $k$-symmetric) polynomials of a bounded degree, a convexity test can be provided whose complexity is mainly determined by the degree in the sense that for a fixed degree it is polynomial in the number of variables. Recall that convexity of a function is defined as follows.

\begin{definition}
Let $C\subset\R^m$ be a convex set and $f:C\rightarrow \R$ be a real valued  function. Then $f$ is called \emph{convex} if $$\forall x_1, x_2 \in C, \forall t \in [0, 1]: f(tx_1+(1-t)x_2)\leq t f(x_1)+(1-t)f(x_2).$$

\end{definition}
We remark that convexity is of particular interest also for the question of deciding if a polynomial is non-negative.
\begin{prop}
Let $f\in\rx$. If $f$ is convex, then $\kappa(f)=1$.
\end{prop}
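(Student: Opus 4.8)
The plan is to show that if $f \in \rx$ is convex, then its minimum on any sphere $B_r$ is attained at a point with all rows equal, i.e.\ in $A_1$, so that $\kappa(f) \leq 1$; since $\kappa(f)$ is by definition a positive integer, this forces $\kappa(f) = 1$. First I would fix $r \geq 0$ and pick $z^* \in B_r$ realizing $\min_{x \in B_r} f(x)$. The key observation is that $f$ being $k$-symmetric means it is invariant under the $S_n$-action permuting rows, while at the same time $f$ is convex on all of $\R^{n\times k}$. Given $z^*$, consider the \emph{row-average} point $\bar z \in \R^{n\times k}$ whose every row equals $\frac1n \sum_{i=1}^n z^*_{i\cdot}$. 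This $\bar z$ is the average over the $S_n$-orbit of $z^*$ (more precisely, $\bar z = \frac{1}{n!}\sum_{\sigma \in S_n} \sigma \cdot z^*$ — each row of the orbit-average is $\frac1n\sum_i z^*_{i\cdot}$), hence by convexity of $f$,
\[
f(\bar z) \leq \frac{1}{n!}\sum_{\sigma \in S_n} f(\sigma\cdot z^*) = f(z^*),
\]
using $S_n$-invariance of $f$ in the last equality.

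Next I would deal with the sphere constraint: $\bar z$ need not lie on $B_r$, since averaging does not preserve the norm. By the parallelogram/convexity inequality, $\|\bar z\|^2 \leq \|z^*\|^2 = r^2$ (the orbit average has norm at most the common norm of the orbit points, with equality only if $z^*$ itself is fixed by $S_n$, i.e.\ already in $A_1$). So write $\rho := \|\bar z\| \leq r$. If $\rho = r$ we are done: $\bar z \in B_r \cap A_1$ and $f(\bar z) \leq f(z^*) = \min_{B_r} f$, so the minimum is attained on $A_1$. If $\rho < r$, I would rescale: the point $\tfrac{r}{\rho}\bar z$ lies on $B_r$ and still has all rows equal, so it is in $B_r \cap A_1$. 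The remaining task is to check $f\bigl(\tfrac{r}{\rho}\bar z\bigr) \leq \min_{B_r} f$, which is not automatic from convexity alone — this is the step I expect to be the main obstacle.

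To handle the rescaling step, I would note that the case $\rho < r$ forces $0$ to be strictly inside the segment... actually a cleaner route: restrict $f$ to the line through the origin spanned by $\bar z$, obtaining a convex function $t \mapsto f(t\,\bar z/\rho)$ of one variable; its minimum over $t \in [-r, r]$ is attained at an endpoint or at an interior critical point, and in either case there is a point of the form $t_0\bar z/\rho$ with $|t_0| \leq r$ and $f(t_0 \bar z/\rho) \leq f(\bar z)$. But I still need the norm to be exactly $r$. The genuinely robust argument: since $\bar z \in A_1 \cap B_\rho$ with $\rho \leq r$, and since \emph{every} point of $A_1 \cap B_r$ is of the form (all rows equal to $v$) with $\|v\|$ fixed, one shows directly that $\min_{B_r} f = \min_{B_r \cap A_1} f$ by instead invoking Theorem~\ref{thm:Main} or a degree bound as a fallback — but the self-contained way is: convexity of $f$ plus $f$ attaining its global behavior suitably lets us reduce to $A_1$. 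Concretely, I would argue that the function $h(v) := f(\text{all rows } = v)$ on $\R^k$ is itself convex and $S_n$-symmetry is irrelevant for it; and the minimum of $f$ on $B_r$ equals the minimum of $h$ on $\{v : n\|v\|^2 = r^2\}$ provided we can always push a minimizer into $A_1$ without increasing $f$ and without changing the radius — which the orbit-averaging gives up to the radius discrepancy, and the radius discrepancy is then closed by a continuity/compactness argument letting $r$ vary, or simply by observing that if $\rho < r$ strictly for the minimizer then $z^*$ was not $S_n$-fixed, and one can then run a short additional convexity argument along the orbit. I would present the orbit-averaging inequality and the norm inequality as the heart of the proof, then dispatch the radius bookkeeping; the norm-vs-average mismatch is the only subtle point and I would flag it explicitly.
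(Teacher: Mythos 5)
Your first paragraph --- averaging $z^*$ over its $S_n$-orbit to get $\bar z\in A_1$ and using convexity plus $S_n$-invariance to obtain $f(\bar z)\le f(z^*)$ --- is exactly the paper's proof; the paper stops right there. The ``radius bookkeeping'' you correctly flag is therefore the real issue, and none of the patches you sketch can close it, because with $B_r$ taken literally as the sphere $\{\sum_{i,j}x_{ij}^2=r\}$ the claimed equality of minima can fail for a convex $f$: take $k=1$ and $f=(X_{11}+\dots+X_{n1})^2$, which is convex and symmetric. Its minimum on $B_r$ is $0$ (attained at points whose coordinates sum to zero), whereas on $B_r\cap A_1$ all coordinates equal $\pm\sqrt{r/n}$ and $f=nr>0$; so under the sphere reading this $f$ has $\kappa(f)=2$, in agreement with the symmetric bound $\max\{2,\lfloor\deg f/2\rfloor\}$ quoted in the paper, whose ``$2$'' exists precisely because of such quadratic examples. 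Consequently rescaling $\bar z$ back to the sphere (which can strictly increase $f$), restricting to a line through the origin, or falling back on Theorem~\ref{thm:Main} (which for quadratics only yields a bound of at least $2$, never $1$) are all dead ends; the second half of your write-up is not a proof and cannot be completed as stated.

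The way to make both the proposition and the one-line averaging argument correct is to drop the exact-radius constraint: compare minima over balls $\{\sum_{i,j}x_{ij}^2\le r\}$ (or argue globally, which is what the proposition is used for, namely checking nonnegativity of convex $k$-symmetric functions on $A_1$). Since averaging does not increase the norm --- your inequality $\|\bar z\|\le\|z^*\|$ --- the point $\bar z$ stays in the ball, lies in $A_1$, and satisfies $f(\bar z)\le f(z^*)$; that is the entire proof. So your core argument coincides with the paper's, and your instinct that the sphere constraint is the sticking point is sound (the paper's own one-line proof silently glosses over it); the correct resolution is not extra analysis on the sphere but the observation that the statement must be read with balls or sub-level sets, at which point your first paragraph already finishes the job. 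The speculative second half should be deleted rather than presented as a route still to be ``dispatched''.
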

\begin{proof}
Let $\xi\in V^k$. Then 
$$f(\xi)=\sum_{\sigma\in S_n}\frac{1}{n!}f(\sigma(\xi))\geq f\left(\underbrace{\sum_{\sigma\in S_n}\frac{1}{n!}\sigma(\xi)}_{:=\zeta}\right).$$ Since  $\zeta\in A_1$, the statement follows.
\end{proof}
Using the classical definition above we remark the following characterizations which will allow the use of our main theorem.
\begin{prop}\label{prop:main}
Let $f:\R^{m} \to \R$ be a polynomial function.
\begin{enumerate}
\item Then $f$ is convex if and only if the polynomial function $g_f: \R^{2m} \to \R$,
\[g_f(x,\tilde x) := \tilde x^TD^2f(x)\tilde x = \sum_{i=1}^m\sum_{j=1}^m\tilde x_i \tilde x_j \partial_{ij}f(x)\]
is non-negative.
\item If $f\in \rx$ is a $k$-symmetric polynomial, then $g_f \in \R[V^k\oplus{V^k}]$ is a $2k$-symmetric polynomial.
\item Let $f\in \rx$ be a $k$-symmetric polynomial
and consider $\kappa(g_f)$ for the  $2k$-symmetric function $g_f$ defined in $(1)$. Then $f$ is convex if and only if the restriction
of $f$ onto each of the linear subspaces defining $A_{\kappa(g_f)}$ is convex. 
\end{enumerate}
\end{prop}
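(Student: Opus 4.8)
The plan is to prove the three parts in order, as (1) and (2) are essentially computational while (3) is the substantive claim that combines them with Theorem~\ref{thm:reformulation}.

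For part (1), I would recall the standard second-order characterization of convexity for twice continuously differentiable functions: a $C^2$ function $f$ on $\R^m$ is convex if and only if its Hessian $D^2 f(x)$ is positive semidefinite for every $x \in \R^m$. Positive semidefiniteness of the symmetric matrix $D^2 f(x)$ is equivalent to the quadratic form $\tilde x \mapsto \tilde x^T D^2 f(x) \tilde x$ being non-negative for all $\tilde x \in \R^m$. Running over all $x$ as well, this says exactly that $g_f(x,\tilde x) \geq 0$ on $\R^{2m}$. Since a polynomial is automatically $C^\infty$, nothing needs to be said about smoothness; I would just cite the classical result and spell out the equivalence of the two formulations of PSD-ness.

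For part (2), I would check invariance directly. Write the variables as an $n \times 2k$ array, thinking of the first $k$ columns as the $x$-block and the last $k$ columns as the $\tilde x$-block, so that $S_n$ acts by simultaneously permuting the rows of both blocks. Given $\sigma \in S_n$, one has $\partial_{ij} f(\sigma x) = (\partial_{\sigma^{-1}(i),j} f)(x)$ by the chain rule together with the hypothesis that $f$ is $S_n$-invariant (differentiating the identity $f(\sigma x) = f(x)$). Substituting this into the expression $g_f(\sigma x, \sigma \tilde x) = \sum_{i,j} (\sigma\tilde x)_{i}(\sigma\tilde x)_j \,\partial_{ij}f(\sigma x)$ — here indices run over the $kn$-dimensional vectors $x,\tilde x$, and $\sigma$ permutes the $n$ blocks — and reindexing shows $g_f(\sigma x, \sigma \tilde x) = g_f(x,\tilde x)$. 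Thus $g_f$ is $2k$-symmetric.

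For part (3), combining (1) and (2), $f$ is convex if and only if the $2k$-symmetric polynomial $g_f$ is non-negative on $\R^{n \times 2k}$. By Proposition~\ref{prop:prop}(i) applied to $g_f$, this holds if and only if $g_f \geq 0$ on $A_{\kappa(g_f)}$, i.e. if and only if $g_f$ is non-negative on each linear subspace corresponding to a $\kappa(g_f)$-partition of $n$. The one point requiring care is translating "$g_f \geq 0$ on the subspace where $x$ and $\tilde x$ have at most $m := \kappa(g_f)$ distinct joint rows" back into a statement about $f$. Restricting to such a subspace means restricting $x$ to the corresponding subspace $L$ of $\R^{n\times k}$ and simultaneously restricting $\tilde x$ to the same $L$; since $g_{f|_L}(x,\tilde x) = \tilde x^T D^2(f|_L)(x)\,\tilde x$ equals the restriction of $g_f$ to $L \oplus L$ (the Hessian of the restriction is the restriction of the Hessian, as $L$ is a linear subspace), non-negativity of $g_f$ on that subspace is exactly convexity of $f|_L$ by part (1) again. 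Here I would note that the $A_m$ for the $2k$-symmetric polynomial $g_f$ is a union of subspaces of the form $L \oplus L$ with $L$ one of the $A_m$-subspaces for $k$-symmetric functions, because the $S_n$-action on the doubled variable array permutes the $x$- and $\tilde x$-rows in lockstep. Assembling these equivalences over all the subspaces gives the claim. The main obstacle — really the only non-formal point — is this bookkeeping identification of the $A_{\kappa(g_f)}$-subspaces in $\R^{n\times 2k}$ with the "diagonal" $L \oplus L$ and the verification that restriction commutes with taking the Hessian form; everything else is the classical convexity criterion plus the already-proved Proposition~\ref{prop:prop}.
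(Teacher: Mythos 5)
Your proof is correct and takes essentially the same route as the paper's (which is far terser): the second-order convexity criterion for (1), the invariance-by-reindexing check for (2), and the combination of (1), (2) with Proposition~\ref{prop:prop} and the definition of $\kappa(g_f)$ for (3). The explicit bookkeeping you supply in (3) --- identifying the $A_{\kappa(g_f)}$-subspaces of $\R^{n\times 2k}$ with the diagonal subspaces $L\oplus L$ and noting that the Hessian form commutes with restriction to a linear subspace --- is precisely what the paper's ``follows directly'' leaves implicit, so it is a welcome elaboration rather than a different argument.
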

\begin{proof}
The first assertion is a direct consequence of Taylor's theorem. For the second statement observe that  for each $\sigma \in {S_m}$ such that $f(\sigma(x)) = f(x)$ for all $x\in\R^m$, we get $g_f(\sigma(x),\sigma(\tilde x)) = g_f(x,\tilde x)$ for all $x,\tilde x \in \R^m$, from which the assertion follows. The last statement follows directly from $(1)$ and $(2)$ and the definition of $\kappa(g_f)$.

\end{proof}

Now we fix $k$ and $d \geq 2$ and consider a $k$-symmetric polynomial $f$ of degree $d$. By Proposition~\ref{prop:main}, it follows that convexity of $f$ can algorithmically be checked using Theorem~\ref{thm:Main}.
 Let {$c(\ell,d)$ 
be an upper bound for the complexity of} deciding if a polynomial 
{in $\ell$ variables}
of degree at most $d$ is convex. Then, we can conclude with Propositions~\ref{prop:prop} {and~\ref{prop:numberPart}} that the complexity of deciding convexity of $f$ is bounded by {$c(k\cdot \kappa(f), d)\cdot n^{\kappa(g_f)}$}. Since $\deg (g_f) \leq d$, we can directly deduce from Corollary~\ref{cor:dk} that $\kappa(g_f) \leq d^{2k}$, which implies that the complexity {(for fixed $k$ and $d$)} grows  polynomially in the number of variables. Additionally, since for each of the resulting $k\cdot\kappa(f)$-dimensional polynomials convexity can be decided independently, this approach can be parallelized. 

However, the bound for $\kappa(g_f)$ can be improved considerably by exploiting the specific structure of $g_f$. 
Note that for any $x,\tilde{x}\in\R^{n\times k}$ the polynomial $g_f(X,\tilde{x})$ is of degree at most $d-2$ and $g_f(x,\tilde{X})$ of degree {at most} $2$.
So, Corollary\,$\ref{cor:kk prod d}$ can be used to infer that
\[\kappa(g_f) \leq 8^k k^{2k} (d-2)^k,\]
which is a better bound in case $d \geq 8k^2$. In fact, we will prove below {(Corollary~\ref{cor:conSym})} that the term $k^{2k}$ is superfluous. 

For this purpose, we examine the non-negativity of $g_f$ using the reformulation in terms of lattice points presented in Theorem~\ref{thm:reformulation}. In the following Proposition we examine the possible exponent vectors of $g_f$. We make use of Notation~\ref{not:morphismus,Mf,Ef} to formulate our result.
\begin{prop} \label{prop:Hf}
	Let $f\in\rx$ be a $k$-symmetric polynomial. For $i,j \in \{1,\dots,k\}$ we consider $\tau_{ij}: \R^k\to \R^k, x\mapsto x-e_i-e_j$ (where $e_1,\dots,e_k$ is the standard basis of $\R^k$). Then we have
    \begin{equation} \label{eq:EGF}
    E_{g_f} \subset \bigcup_{i=1}^k\bigcup_{j=1}^k (\tau_{ij}(E_f) \cap \R_{\geq 0}^k) \times \{e_i+e_j\}\subset \R^k\times \R^k.
    \end{equation}
In particular,
    \[E_{g_f} \subset H_f \times \Delta(2,\dots,2),\]
    where $H_f := \bigcup_{i,j} \tau_{ij}(E_f) \cap \R_{\geq 0}^k.$
\end{prop}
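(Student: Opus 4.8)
The plan is to unravel the construction of $g_f$ one monomial of $f$ at a time and to track the column degrees of the monomials that are produced. Recall that, since $g_f\in\R[V^k\oplus V^k]$ is $2k$-symmetric, the set $E_{g_f}$ records for each monomial of $g_f$ the pair $(\beta,\gamma)\in\N^k\times\N^k$ consisting of the total degree $\beta$ in the columns $X_{\cdot 1},\dots,X_{\cdot k}$ and the total degree $\gamma$ in the columns $\tilde X_{\cdot 1},\dots,\tilde X_{\cdot k}$; this is exactly the exponent tuple of the image of the monomial under the grading morphism analogous to $\varphi$. First I would observe that, by linearity and the Leibniz rule,
\[
g_f=\sum_{m\in M_f}c_m\sum_{p,q}\sum_{p',q'}\tilde X_{pq}\tilde X_{p'q'}\,\partial_{pq}\partial_{p'q'}m,
\]
where $c_m\in\R$, the indices run over $1\le p,p'\le n$ and $1\le q,q'\le k$, here $\partial_{pq}\partial_{p'q'}m$ denotes $\partial^2 m/(\partial X_{pq}\partial X_{p'q'})$, and each inner summand is either $0$ or a positive integer multiple of a single monomial. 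Hence every monomial of $g_f$ is, up to a nonzero scalar, of the form $\tilde X_{pq}\tilde X_{p'q'}\,\partial_{pq}\partial_{p'q'}m$ for some $m\in M_f$ and some pair of variable indices, as collecting like terms can only discard monomials.

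Next, I would fix such an $m$ and let $\alpha\in\N^k$ be its column-degree vector, so that $\varphi(m)=Y^\alpha$ and thus $\alpha\in E_f$. A direct computation — distinguishing the cases $q\ne q'$ and $q=q'$, the latter split further according to whether $p=p'$ — shows that, whenever it is nonzero, the monomial $\tilde X_{pq}\tilde X_{p'q'}\,\partial_{pq}\partial_{p'q'}m$ has column-degree pair
\[
\bigl(\alpha-e_q-e_{q'},\ e_q+e_{q'}\bigr)\in\N^k\times\N^k .
\]
The only point that needs care is that the shift in the first coordinate is uniformly $\alpha\mapsto\alpha-e_q-e_{q'}=\tau_{qq'}(\alpha)$ across all three cases: differentiation is with respect to the individual variables $X_{pq},X_{p'q'}$, but since $\alpha$ records only total column degrees, the two derivatives lower precisely the $q$-th and $q'$-th column degrees by one each, and when $q=q'$ one checks that both derivatives act inside column $q$, so that entry drops by $2$. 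Because a nonzero monomial has nonnegative exponents, its first column-degree vector $\tau_{qq'}(\alpha)$ automatically lies in $\R_{\geq 0}^k$, hence in $\tau_{qq'}(E_f)\cap\R_{\geq 0}^k$.

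Combining the two steps gives $E_{g_f}\subseteq\bigcup_{i,j=1}^k\bigl(\tau_{ij}(E_f)\cap\R_{\geq 0}^k\bigr)\times\{e_i+e_j\}$, which is \eqref{eq:EGF}. For the final inclusion it then suffices to note that every vector $e_i+e_j$ has coordinate sum $2$, hence lies in $\Delta(2,\dots,2)=\{y\in\R_{\geq 0}^k\mid\sum_\ell y_\ell\le 2\}$; together with $\tau_{ij}(E_f)\cap\R_{\geq 0}^k\subseteq H_f$ this yields $E_{g_f}\subseteq H_f\times\Delta(2,\dots,2)$. The argument is almost entirely bookkeeping; the single genuine subtlety is, as noted, verifying that the column-degree shift is uniformly $-e_i-e_j$ regardless of which variables the second derivative is taken with respect to.
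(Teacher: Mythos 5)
Your proposal is correct and follows essentially the same route as the paper, which simply notes that any second partial derivative of $f$ with respect to variables in columns $i$ and $j$ has exponent set contained in $\tau_{ij}(E_f)\cap\R_{\geq 0}^k$; you merely spell out this bookkeeping (Leibniz expansion, the uniform shift by $-e_q-e_{q'}$, and $e_i+e_j\in\Delta(2,\dots,2)$) in full detail.
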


\begin{proof}
	This immediately follows by examining the definition of $g_f$: Just note that if $h$ is a second partial derivative of $f$ with respect to variables in columns $i$ and $j$, then $E_h \subset \tau_{ij}(E_f)\cap \R_{\geq 0}^k$.
\end{proof}
The above observations now yield the following Theorem.
\begin{thm}\label{thm:Hf}
	Let $f\in\rx$ be a $k$-symmetric polynomial and let $a_1,\dots,a_k\in\Q_{\geq 1}$ be such that $\Delta(a_1,\dots,a_k)$ is a simplex enclosing $H_f$ (as defined in Proposition~\ref{prop:Hf}). Then
    \[\kappa(g_f) \leq 3^k \prod_{j=1}^k \lfloor 2a_j\rfloor.\]
    In particular, $\kappa(g_f) \leq 6^k k!\,{\rm vol}(\Delta(a_1,\dots,a_k))$.
\end{thm}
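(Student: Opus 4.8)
The plan is to combine Proposition~\ref{prop:Hf} with the product-simplex version of the main bound, Theorem~\ref{thm:reformulation}, applied to the $2k$-symmetric polynomial $g_f$ whose variable array has $2k$ columns. First I would recall from Proposition~\ref{prop:Hf} that $E_{g_f} \subset H_f \times \Delta(2,\dots,2)$, so if $\Delta(a_1,\dots,a_k)$ encloses $H_f$ then $E_{g_f}$ is contained in the product $\Delta(a_1,\dots,a_k) \times \Delta(2,\dots,2) \subset \R^{2k}$. The technical point to establish is that $\kappa$ of a $2k$-symmetric polynomial whose exponent set lies in a \emph{product} of two simplices $\Delta(a_1,\dots,a_k)$ and $\Delta(b_1,\dots,b_k)$ is bounded by $\big(\prod_j \lfloor a_j \rfloor\big)\cdot\big(\prod_j \lfloor b_j\rfloor\big)$ whenever each $a_j,b_j \geq 2$. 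This is not literally Theorem~\ref{thm:reformulation}, which bounds $\kappa$ by lattice points of a single simplex enclosing $E_f$; here I want to exploit that the enclosing set is a product of simplices, which is much smaller than the smallest single simplex containing it.

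The cleanest route is to re-run the proof of Theorem~\ref{thm:Main} directly for $g_f$, tracking the two groups of $k$ columns separately. Set $d_j := \lfloor a_j\rfloor$ for the first group of columns and $d_j' := \lfloor b_j \rfloor$ (here $b_j = 2$, so $d_j' = 2$) for the second group, choose weights $w$ for which the enclosing simplex of each factor has the prescribed intercepts (i.e.\ pick $w$ so that $\{w^T y \le d\}$ restricted to each coordinate block cuts the axes at $a_j$ resp.\ $b_j$ — concretely one can work block-wise since $g_f$'s exponent set is already a product), add the perturbation $g = \sum_{ij} X_{ij}^{d_j+1}$ across all $2k$ columns, invoke Lagrange multipliers on the sphere as in the original proof, and use Proposition~\ref{prop:derivPAlpha} to see that each row $z^*_{i\cdot} \in \C^{2k}$ of the minimizer satisfies a system $q_1(y) = \dots = q_{2k}(y) = 0$ where $q_j$ has leading term a pure power $y_j^{d_j}$ (resp.\ $y_j^{2}$) in the appropriate column. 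Then the Bézout-type count gives $\#\mathcal V(q_1,\dots,q_{2k}) \le \prod_{j=1}^k d_j \cdot \prod_{j=1}^k d_j' = \big(\prod_j \lfloor a_j\rfloor\big)\, 2^k$, hence $\kappa(g_f) \le 2^k \prod_j \lfloor a_j\rfloor$. Plugging in $\lfloor 2 a_j \rfloor$ in place of $2\lfloor a_j \rfloor$ — and noting $\lfloor 2a_j \rfloor$ may exceed $2\lfloor a_j\rfloor$ by at most $1$ — needs a slightly more careful bookkeeping: one should instead observe directly that the relevant count of monomials surviving in $\C[Y]/(q_1,\dots,q_{2k})$ for the $j$-th first-block column is the number of integers in $[0, a_j)$, which is $\lceil a_j \rceil \le \lfloor a_j\rfloor + 1$; to land exactly on $3^k\prod\lfloor 2a_j\rfloor$ one uses that $\lceil a_j\rceil \cdot 2 \le 3\lfloor 2a_j\rfloor$ is far too lossy, so the honest argument is to pass through Theorem~\ref{thm:reformulation} applied to the single simplex $\Delta(2a_1+1,\dots) $? — better: apply Theorem~\ref{thm:reformulation} to $g_f$ with the enclosing simplex $\Delta$ whose $j$-th intercept (first block) is $2a_j$ and whose $(k+j)$-th intercept is $3$, which contains $\Delta(a_1,\dots,a_k)\times\Delta(2,\dots,2)$ since that product lies in $\{x : \sum_j x_j/(2a_j) + \sum_j \tilde x_j/3 \le 1\}$ after rescaling; this yields $\kappa(g_f) \le \prod_j \lfloor 2a_j \rfloor \cdot \lfloor 3\rfloor^k = 3^k \prod_j \lfloor 2a_j\rfloor$ directly.

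So concretely: I would verify the containment $\Delta(a_1,\dots,a_k)\times\Delta(2,\dots,2) \subseteq \Delta(2a_1,\dots,2a_k,3,\dots,3)\subset\R^{2k}$ (a short computation: a point $(x,\tilde x)$ in the left side has $\sum x_j/a_j \le 1$ and $\sum \tilde x_j/2 \le 1$, hence $\tfrac12\sum x_j/a_j + \tfrac12\sum\tilde x_j/2 \le 1$, i.e.\ $\sum x_j/(2a_j) + \sum \tilde x_j/4 \le 1$, and a fortiori $\sum x_j/(2a_j) + \sum\tilde x_j/3 \le 1$), combine with Proposition~\ref{prop:Hf} to get $E_{g_f}\subset \Delta(2a_1,\dots,2a_k,3,\dots,3)$, apply Theorem~\ref{thm:reformulation} (noting all intercepts $2a_j \ge 2$ and $3 \ge 2$), and read off $\kappa(g_f) \le \prod_{j=1}^k\lfloor 2a_j\rfloor \cdot \prod_{j=1}^k \lfloor 3\rfloor = 3^k\prod_{j=1}^k \lfloor 2a_j\rfloor$. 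For the final ``in particular'' clause, bound the lattice-point count: $\prod_j \lfloor 2a_j\rfloor \le \prod_j 2a_j = 2^k \prod_j a_j = 2^k\, k!\,{\rm vol}(\Delta(a_1,\dots,a_k))$ since ${\rm vol}(\Delta(a_1,\dots,a_k)) = \tfrac{1}{k!}\prod_j a_j$; multiplying by $3^k$ gives $\kappa(g_f)\le 6^k\,k!\,{\rm vol}(\Delta(a_1,\dots,a_k))$.

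The main obstacle is getting the exact constant right rather than the shape of the bound: the interplay between $\lfloor 2a_j\rfloor$ and the floor/ceiling arithmetic forces one to choose the enclosing simplex for $g_f$ carefully (intercepts $2a_j$ and $3$, not $2a_j$ and $2$ or $a_j$ and $2$), and to check that $\Delta(2,\dots,2)$ genuinely fits inside the ``$3$''-scaled direction after the convex-combination rescaling above — that is the one place where a naive product bound would give $2^k\prod_j\lfloor a_j\rfloor$, which does not match the stated $3^k\prod_j\lfloor 2a_j\rfloor$, so one must route through a single enclosing simplex with enlarged intercepts exactly as in Theorem~\ref{thm:reformulation}. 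Everything else is a direct application of results already proved.
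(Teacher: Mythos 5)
Your overall strategy -- push $E_{g_f}\subset H_f\times\Delta(2,\dots,2)$ into a single enclosing simplex and then apply Theorem~\ref{thm:reformulation} -- is exactly the paper's route, and your derivation of the ``in particular'' clause is fine. But the one step that carries the whole proof is wrong: the containment $\Delta(a_1,\dots,a_k)\times\Delta(2,\dots,2)\subseteq\Delta(2a_1,\dots,2a_k,3,\dots,3)$ is false. Your ``a fortiori'' goes in the wrong direction: from $\sum_j x_j/(2a_j)+\sum_j \tilde x_j/4\le 1$ you cannot conclude $\sum_j x_j/(2a_j)+\sum_j \tilde x_j/3\le 1$, because $\tilde x_j/3\ge \tilde x_j/4$ for $\tilde x_j\ge 0$. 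Concretely, the point $(a_1e_1,\,2e_1)$ lies in the product but gives $\tfrac{a_1}{2a_1}+\tfrac{2}{3}=\tfrac{7}{6}>1$. The failure is not an artifact of taking the full product: the actual exponent vectors of $g_f$ have second block exactly $e_i+e_j$ (coordinate sum $2$), so membership in your simplex would force $\sum_j x_j/a_j\le \tfrac23$ on the first block, which points of $H_f$ near the boundary of $\Delta(a_1,\dots,a_k)$ need not satisfy (e.g.\ $x=a_je_j\in H_f$ when $a_j$ is an integer).

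The repair is the paper's: enclose the product in $\Delta(2a_1+\delta,\dots,2a_k+\delta,\,4-\varepsilon,\dots,4-\varepsilon)$ for suitable rational $\varepsilon,\delta\in(0,1)$. Here $4$ (not $3$) is the natural intercept for the second block, and shrinking it to $4-\varepsilon$ to get $\lfloor 4-\varepsilon\rfloor=3$ creates a deficit $\tfrac{2}{4-\varepsilon}-\tfrac12$ that must be absorbed by the slack gained from stretching $2a_j$ to $2a_j+\delta$ in the first block, where $\delta$ is chosen small enough that $\lfloor 2a_j+\delta\rfloor=\lfloor 2a_j\rfloor$ for all $j$ (possible since the $a_j$ are rational). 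With that containment, Theorem~\ref{thm:reformulation} gives $\kappa(g_f)\le 3^k\prod_{j}\lfloor 2a_j\rfloor$ as claimed; note also that dropping the $\delta$ does not suffice, since $\sum_j x_j/(2a_j)$ can equal $\tfrac12$ on lattice points of $H_f$. Your earlier block-wise ``product B\'ezout'' sketch, which would give $2^k\prod_j\lfloor a_j\rfloor$, is not covered by the results in the paper (Theorem~\ref{thm:Main} and Proposition~\ref{prop:derivPAlpha} are set up for a single weighted degree), and you were right to abandon it -- but the substitute containment you settled on must be the $\varepsilon,\delta$-perturbed one above.
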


\begin{proof}
	This follows directly from Theorem~\ref{thm:reformulation} with the preceding Proposition by noting that there exist $\varepsilon, \delta \in (0,1)$ such that $\Delta(a_1,\dots,a_k)\times\Delta(2,\dots,2)$ is contained in $\Delta(2a_1+\delta, \dots,2a_k+\delta,4-\varepsilon,\dots,4-\varepsilon)$ and such that $\lfloor 2a_j+\delta \rfloor = \lfloor 2a_j\rfloor$ for all $j\in\{1,\ldots,k\}$.
\end{proof}

\begin{ex}[Example~\ref{ex:1} continued]
Consider again the family of $2$-symmetric polynomials $f$ which was studied in Example~\ref{ex:1}. By reading $E_f$ from Figure~\ref{fig:1} we can easily construct $H_f$ (see Proposition~\ref{prop:Hf}). We need to fit a $w$-degree line such that the originating triangle encloses $H_f$. With a view to Theorem~\ref{thm:Hf}, we should choose a {triangle $\Delta(a_1,a_2)$}
that minimizes $\left\lfloor2a_1\right\rfloor\cdot\left\lfloor2a_2\right\rfloor$. With the choice indicated in
Figure~\ref{fig:3} {we get} $a_1<2.5$ and $a_2<2$. Hence, $\kappa(g_f)\leq 108$. However, it turns out that the bound can be improved by taking a closer look at $E_{g_f}$ using \eqref{eq:EGF}. By solving a small optimization problem we are able to find that  $E_{g_f}\subset\Delta(9-\varepsilon,3-\varepsilon,4-\varepsilon,2-\varepsilon)$ for a small $\varepsilon>0$. From this observation one can deduce the better bound $\kappa(g_f)\leq 48$.
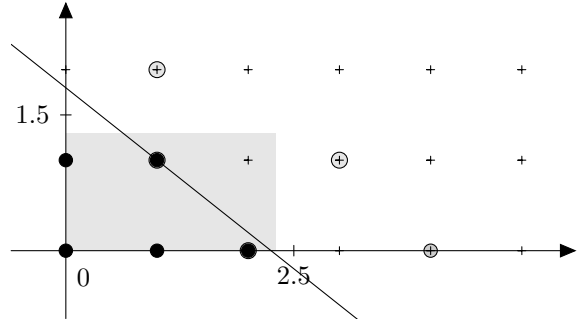
\begin{figure}
\begin{tikzpicture}[line cap=round,line join=round,>=triangle 45,x=1.2cm,y=1.2cm]
\draw[->,color=black] (-0.6,0.0) -- (5.6,0.0);

\draw[->,color=black] (0.0,-0.75) -- (0.0,2.75);

\draw[color=black] (0pt,-10pt) node[right] {\footnotesize $0$};
\draw[shift={(0,1.5)},color=black] (2pt,0pt) -- (-2pt,0pt) node[left] {\footnotesize $1.5$};
\draw[shift={(2.5,0)},color=black] (0pt,2pt) -- (0pt,-2pt) node[below] {\footnotesize $2.5$};
\clip(-0.6,-0.75) rectangle (5.6,2.75);

\draw [domain=-0.6:7.6] plot(\x,{-0.8*\x+1.8});

\fill[line width=0.0pt,fill=black,fill opacity=0.1] (0,1.3) -- (2.3,1.3) -- (2.3,0.0) -- (0.0,0.0) -- cycle;

\draw [color=black] (0.0,0.0)-- ++(-1.5pt,0 pt) -- ++(3.0pt,0 pt) ++(-1.5pt,-1.5pt) -- ++(0 pt,3.0pt);
\draw [color=black] (1.0,0.0)-- ++(-1.5pt,0 pt) -- ++(3.0pt,0 pt) ++(-1.5pt,-1.5pt) -- ++(0 pt,3.0pt);
\draw [color=black] (2.0,0.0)-- ++(-1.5pt,0 pt) -- ++(3.0pt,0 pt) ++(-1.5pt,-1.5pt) -- ++(0 pt,3.0pt);
\draw [fill=black,fill opacity=0.2] (2.0,0.0) circle (3pt);
\draw [color=black] (3.0,0.0)-- ++(-1.5pt,0 pt) -- ++(3.0pt,0 pt) ++(-1.5pt,-1.5pt) -- ++(0 pt,3.0pt);
\draw [color=black] (4.0,0.0)-- ++(-1.5pt,0 pt) -- ++(3.0pt,0 pt) ++(-1.5pt,-1.5pt) -- ++(0 pt,3.0pt);
\draw [fill=black,fill opacity=0.2] (4.0,0.0) circle (2.5pt);
\draw [color=black] (5.0,0.0)-- ++(-1.5pt,0 pt) -- ++(3.0pt,0 pt) ++(-1.5pt,-1.5pt) -- ++(0 pt,3.0pt);

\draw [color=black] (0.0,1.0)-- ++(-1.5pt,0 pt) -- ++(3.0pt,0 pt) ++(-1.5pt,-1.5pt) -- ++(0 pt,3.0pt);
\draw [color=black] (1.0,1.0)-- ++(-1.5pt,0 pt) -- ++(3.0pt,0 pt) ++(-1.5pt,-1.5pt) -- ++(0 pt,3.0pt);
\draw [fill=black,fill opacity=0.1] (1.0,1.0) circle (3pt);
\draw [color=black] (2.0,1.0)-- ++(-1.5pt,0 pt) -- ++(3.0pt,0 pt) ++(-1.5pt,-1.5pt) -- ++(0 pt,3.0pt);
\draw [color=black] (3.0,1.0)-- ++(-1.5pt,0 pt) -- ++(3.0pt,0 pt) ++(-1.5pt,-1.5pt) -- ++(0 pt,3.0pt);
\draw [fill=black,fill opacity=0.1] (3.0,1.0) circle (3pt);
\draw [color=black] (4.0,1.0)-- ++(-1.5pt,0 pt) -- ++(3.0pt,0 pt) ++(-1.5pt,-1.5pt) -- ++(0 pt,3.0pt);
\draw [color=black] (5.0,1.0)-- ++(-1.5pt,0 pt) -- ++(3.0pt,0 pt) ++(-1.5pt,-1.5pt) -- ++(0 pt,3.0pt);
\draw [color=black] (0.0,2.0)-- ++(-1.5pt,0 pt) -- ++(3.0pt,0 pt) ++(-1.5pt,-1.5pt) -- ++(0 pt,3.0pt);
\draw [color=black] (1.0,2.0)-- ++(-1.5pt,0 pt) -- ++(3.0pt,0 pt) ++(-1.5pt,-1.5pt) -- ++(0 pt,3.0pt);
\draw [fill=black,fill opacity=0.1] (1.0,2.0) circle (3pt);
\draw [color=black] (2.0,2.0)-- ++(-1.5pt,0 pt) -- ++(3.0pt,0 pt) ++(-1.5pt,-1.5pt) -- ++(0 pt,3.0pt);
\draw [color=black] (3.0,2.0)-- ++(-1.5pt,0 pt) -- ++(3.0pt,0 pt) ++(-1.5pt,-1.5pt) -- ++(0 pt,3.0pt);
\draw [color=black] (4.0,2.0)-- ++(-1.5pt,0 pt) -- ++(3.0pt,0 pt) ++(-1.5pt,-1.5pt) -- ++(0 pt,3.0pt);
\draw [color=black] (5.0,2.0)-- ++(-1.5pt,0 pt) -- ++(3.0pt,0 pt) ++(-1.5pt,-1.5pt) -- ++(0 pt,3.0pt);

\draw [fill=black] (1.0,1.0) circle (2.5pt);
\draw [fill=black] (0.0,0.0) circle (2.5pt);
\draw [fill=black] (2.0,0.0) circle (2.5pt);
\draw [fill=black] (1.0,0.0) circle (2.5pt);
\draw [fill=black] (0.0,1.0) circle (2.5pt);
\end{tikzpicture}
\caption{Visualization of Theorem~\ref{thm:Hf}}
\label{fig:3}
\end{figure}
\end{ex}Apart from this geometrical view we provide a formulation in terms of weighted degrees. Note that this formulation is a bit weaker than Theorem~\ref{thm:Hf}, but it might still be useful as we need less information about the function in question in order to calculate the resulting bound. 
\begin{thm}\label{thm:convex2}
	Let $w=(w_1,\dots,w_k)$ be a $k$-tuple of positive integers. Let {$d\geq 2\min_{1\leq j\leq k} w_j +\max_{1\leq j\leq k} w_j$} and let $f\in\rx$ be a $k$-symmetric polynomial of $w$-weighted degree at most $d$. Set $\tilde d := d-2{\min_{1\leq j\leq k} w_j}$. Then
    \[\kappa(g_f) \leq 3^k \prod_{j=1}^k \left\lfloor \frac{2\tilde d}{w_j} \right\rfloor.\]
\end{thm}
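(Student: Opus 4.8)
The plan is to reduce Theorem~\ref{thm:convex2} to Theorem~\ref{thm:Hf} (or directly to Theorem~\ref{thm:reformulation}) by locating an explicit simplex $\Delta(a_1,\dots,a_k)$ enclosing the set $H_f$ from Proposition~\ref{prop:Hf}. Write $w_{\min}:=\min_{1\le j\le k} w_j$, so $\tilde d=d-2w_{\min}$. First I would recall that the hypothesis $\deg_w(f)\le d$ means $E_f\subset\{y\in\R_{\ge 0}^k\mid w^Ty\le d\}$, and that $H_f=\bigcup_{i,j}\tau_{ij}(E_f)\cap\R_{\ge 0}^k$ where $\tau_{ij}(x)=x-e_i-e_j$. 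So an exponent vector $\beta\in H_f$ has the form $\beta=\alpha-e_i-e_j$ for some $\alpha\in E_f$ with $\alpha\ge e_i+e_j$ (coordinatewise, after intersecting with $\R_{\ge 0}^k$), hence $w^T\beta = w^T\alpha - w_i - w_j \le d - w_i - w_j \le d - 2w_{\min} = \tilde d$. Therefore $H_f\subset\{y\in\R_{\ge0}^k\mid w^Ty\le\tilde d\}$, which is exactly the simplex $\Delta(\tilde d/w_1,\dots,\tilde d/w_k)$.

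Next I would feed $a_j:=\tilde d/w_j$ into Theorem~\ref{thm:Hf}. The hypothesis $d\ge 2w_{\min}+\max_j w_j\ge 3w_{\min}$ forces $\tilde d\ge w_{\min}$, and more precisely $\tilde d = d-2w_{\min}\ge \max_j w_j\ge w_j$ for every $j$, so each $a_j=\tilde d/w_j\ge 1$; thus the $a_j$ lie in $\Q_{\ge 1}$ as required by Theorem~\ref{thm:Hf}. Applying that theorem gives
\[
\kappa(g_f)\le 3^k\prod_{j=1}^k\left\lfloor 2a_j\right\rfloor = 3^k\prod_{j=1}^k\left\lfloor \frac{2\tilde d}{w_j}\right\rfloor,
\]
which is precisely the claimed bound.

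The one point that needs a little care — and the only place I expect any friction — is checking the inequality $w^T\beta\le\tilde d$ cleanly in the degenerate cases: when $i=j$ the shift is $\tau_{ii}(x)=x-2e_i$ and we need $\alpha_i\ge 2$ for the point to survive the intersection with $\R_{\ge 0}^k$, giving $w^T\beta = w^T\alpha-2w_i\le d-2w_i\le d-2w_{\min}$; when $i\ne j$ we need $\alpha_i,\alpha_j\ge 1$, giving $w^T\beta\le d-w_i-w_j\le d-2w_{\min}$. In all cases the bound $\tilde d$ is respected, so $H_f$ genuinely sits inside the stated simplex. The remaining verification, that Theorem~\ref{thm:Hf}'s hypothesis $a_j\ge 1$ is met, is exactly the role played by the lower bound assumed on $d$; I would state this explicitly so the reader sees where $d\ge 2w_{\min}+\max_j w_j$ is used. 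No genuinely hard step arises — the content is entirely in Proposition~\ref{prop:Hf} and Theorem~\ref{thm:Hf}, and this theorem is their weighted-degree packaging.
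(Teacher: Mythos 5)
Your proof is correct and takes essentially the same route as the paper: both arguments enclose $H_f$ in the simplex $\Delta(\tilde d/w_1,\dots,\tilde d/w_k)$ (since each shift $\tau_{ij}$ lowers the $w$-degree by $w_i+w_j\ge 2\min_j w_j$) and then apply Theorem~\ref{thm:Hf} with $a_j=\tilde d/w_j$, the hypothesis $d\ge 2\min_j w_j+\max_j w_j$ serving exactly to guarantee $a_j\ge 1$. Your only addition is spelling out the coordinatewise check on $\tau_{ij}(E_f)\cap\R_{\ge 0}^k$, which the paper leaves implicit.
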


\begin{proof}
	Since $f$ is of $w$-weighted degree at most $d$, we have $E_f\subset \Delta(d/w_1,\dots,d/w_k)$. Hence, by definition of $\tilde{d}$, we have $H_f \subset \Delta(\tilde d/w_1, \dots, \tilde d/w_k)$. Finally, note that $\tilde{d}/w_j\geq 1$, since {$d\geq 2\min_{1\leq j\leq k} w_j+\max_{1\leq j\leq k} w_j$}.
\end{proof}
Choosing all weights equal to $1$ we get the following Corollary, 
\begin{cor} \label{cor:conSym}
	Let $f\in\rx$ be a $k$-symmetric polynomial of degree $d \geq 3$. Then
    \[\kappa(g_f) \leq 6^k (d-2)^k.\]
\end{cor}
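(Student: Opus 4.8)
The plan is to obtain Corollary~\ref{cor:conSym} as the special case of Theorem~\ref{thm:convex2} in which all the weights coincide. Concretely, I would set $w = (w_1,\dots,w_k) := (1,1,\dots,1)$. As remarked after Definition~\ref{def:wDeg}, with this choice the $w$-weighted degree is just the ordinary degree, so a $k$-symmetric polynomial $f$ of degree $d$ is of $w$-degree at most $d$. I would then check that the hypothesis of Theorem~\ref{thm:convex2} is satisfied: here $\min_{1\leq j\leq k} w_j = \max_{1\leq j\leq k} w_j = 1$, so the requirement $d \geq 2\min_j w_j + \max_j w_j$ reads $d \geq 3$, which is exactly the assumption of the Corollary.

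With the hypotheses verified, I would simply substitute into the conclusion of Theorem~\ref{thm:convex2}. We have $\tilde d := d - 2\min_j w_j = d-2$, hence
\[
\kappa(g_f) \;\leq\; 3^k \prod_{j=1}^k \left\lfloor \frac{2\tilde d}{w_j} \right\rfloor \;=\; 3^k \prod_{j=1}^k \bigl\lfloor 2(d-2) \bigr\rfloor \;=\; 3^k \bigl(2(d-2)\bigr)^k \;=\; 6^k (d-2)^k,
\]
where the floor is harmless because $2(d-2)$ is already an integer. This gives the claimed bound.

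There is essentially no obstacle here: the statement is a routine specialization of the already-proved Theorem~\ref{thm:convex2}, and the only thing worth spelling out is the (trivial) verification that equal weights turn the weighted-degree hypothesis into the plain-degree hypothesis $d\geq 3$ and that $2(d-2)/w_j$ is an integer so the product of floors collapses cleanly to $\bigl(2(d-2)\bigr)^k$.
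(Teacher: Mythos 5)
Your proposal is correct and is exactly the paper's argument: the paper obtains Corollary~\ref{cor:conSym} by "choosing all weights equal to $1$" in Theorem~\ref{thm:convex2}, which is precisely your specialization $w=(1,\dots,1)$, $\tilde d = d-2$, giving $3^k\bigl(2(d-2)\bigr)^k = 6^k(d-2)^k$. Your verification that $d\geq 3$ matches the hypothesis $d\geq 2\min_j w_j+\max_j w_j$ is the only detail the paper leaves implicit.
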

The following Corollary is  just a reformulation of our results, which we include to emphasize the results in the case of convexity of symmetric polynomials.
\begin{cor}
	Let $f\in\R[V]^{S_n}$ be a symmetric polynomial of degree $d \geq 3$. Then $f$ is a convex function if and only if it is convex on each of the subspaces of points with at most $6(d-2)$ distinct {coordinates}. 

\end{cor}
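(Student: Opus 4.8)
The plan is to specialize the results already established to the case $k=1$. First I would invoke Corollary~\ref{cor:conSym} with $k=1$: for a symmetric polynomial $f \in \R[V]^{S_n}$ of degree $d\geq 3$, this immediately yields $\kappa(g_f)\leq 6^1(d-2)^1 = 6(d-2)$, where $g_f$ is the $2$-symmetric polynomial of Proposition~\ref{prop:main}(1).

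Next I would apply Proposition~\ref{prop:main}(3), which states that $f$ is convex if and only if the restriction of $f$ onto each of the linear subspaces defining $A_{\kappa(g_f)}$ is convex. In the symmetric case $k=1$ we have $V^k = V = \R^n$, and $A_m$ is precisely the union of the coordinate subspaces consisting of points with at most $m$ distinct coordinates. Thus Proposition~\ref{prop:main}(3) says exactly that $f$ is convex if and only if it is convex on each of the subspaces of points with at most $\kappa(g_f)$ distinct coordinates.

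The only remaining point is a monotonicity observation: since $\kappa(g_f)\leq 6(d-2)$, every subspace of points with at most $\kappa(g_f)$ distinct coordinates is contained in (indeed coincides, after relabeling coordinates, with) one of the subspaces of points with at most $6(d-2)$ distinct coordinates. Hence convexity of $f$ on all of the latter subspaces implies convexity of $f$ on all of the former, and therefore — by the equivalence above — convexity of $f$ itself. The converse direction is trivial, since the restriction of a convex function to a linear subspace is again convex.

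I do not expect any genuine obstacle here: the statement is a bookkeeping reformulation of Corollary~\ref{cor:conSym} together with Proposition~\ref{prop:main}. The only points requiring a little care are making the monotonicity step explicit, since $\kappa(g_f)$ may be strictly smaller than $6(d-2)$, and correctly translating the phrase ``subspaces defining $A_m$'' into the $k=1$ language of ``points with at most $m$ distinct coordinates.''
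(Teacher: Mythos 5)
Your proposal is correct and matches the paper's intent exactly: the paper presents this corollary as a direct reformulation of Corollary~\ref{cor:conSym} in the case $k=1$ combined with Proposition~\ref{prop:main}(3), which is precisely your argument. The monotonicity remark (passing from $\kappa(g_f)$ to the possibly larger bound $6(d-2)$) and the trivial converse are exactly the bookkeeping the paper leaves implicit.
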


Finally, we shortly illustrate our results with the following example.

\begin{ex}

We consider the symmetric polynomial

\[
f(X_1,\ldots,X_{25})=
-\frac{1}{2}\sum_{i=1}^{25}  X_i^4 +
\sum_{i,j=1}^{25} X_i^2X_j^2 +
\frac{1}{2}\sum_{i,j,k,l=1}^{25} X_iX_jX_kX_l +
\sum_{i=1}^{25} X_i^2.
\]

It can now be deduced from Proposition~\ref{prop:main} that in order to verify that $f$  defines a convex function it suffices to check non-negativity of the 2-symmetric polynomial $g_f\in\R[X_1, \tilde X_1,\ldots,X_{25},\tilde{X}_{25}]$, given by

\begin{align*}
g_f = 
\sum_{k=1}^{25}\tilde{X}_k^2 \left(-6X_k^2 + 4\sum_{i=1}^{25} X_i^2 +2 \right) +
\sum_{k,\ell=1}^{25} \tilde{X}_k\tilde{X}_\ell \left(8X_kX_\ell + 6\sum_{i,j=1}^{25}X_iX_j \right).
\end{align*}

	Corollary~\ref{cor:conSym} allows us to reduce the problem size: Indeed, we find $\kappa(g_f) \leq 12$, and hence it is sufficient   to test non-negativity of $g_f(X,\tilde X)$ on all points of the form

\begin{eqnarray*} 
\begin{array}{c@{\!\!\!}l}
  \left(
   \begin{array}[c]{cc}
    x_1    & \tilde{x}_1 \\
    \vdots & \vdots   \\
        &   \\
    \hline
        &   \\
    \vdots & \vdots   \\
        &   \\
    \hline
    \vdots & \vdots   \\
    \hline
        &   \\
    \vdots & \vdots   \\
    x_{25}    & \tilde{x}_{25}
   \end{array}  
  \right) 
\end{array} 
&=
\begin{array}{c@{\!\!\!}l}
  \left(
   \begin{array}[c]{cc}
    y_1    & \tilde{y}_1 \\
    \vdots & \vdots   \\
    y_1    & \tilde{y}_1 \\
    \hline
    y_2    & \tilde{y}_2 \\
    \vdots & \vdots   \\
    y_2    & \tilde{y}_2 \\
    \hline
    \vdots & \vdots   \\
    \hline
    y_{12}    & \tilde{y}_{12} \\
    \vdots & \vdots   \\
    y_{12}    & \tilde{y}_{12}
   \end{array}  
  \right) 
\end{array} 
&
\begin{array}[c]{@{}l@{\,}l}
  \left. 
   \begin{array}{c} 
    \vphantom{0}  \\ \vphantom{} \\ \vphantom{0} 
   \end{array} 
  \right\} & \text{$\lambda_1$ times} \\
  \left. 
   \begin{array}{c} 
    \vphantom{0} \\ \vphantom{\vdots} \\ \vphantom{0}  
   \end{array} 
  \right\} & \text{$\lambda_2$ times} \\
 \vphantom{\vdots} & \\ 
  \left. 
   \begin{array}{c} 
    \vphantom{0}  \\ \vphantom{\vdots} \\ \vphantom{0} 
   \end{array} 
  \right\} & \text{$\lambda_{12}$ times}
\end{array} 
\end{eqnarray*}

This in turn implies that  $f(X_1,\dots,X_{25})$ is convex if and only if, for all ${\lambda \vdash_{12} 25}$, the polynomial $g_f^{(\lambda)}\in\R[Y_1,\tilde{Y}_1,\ldots,Y_{12},\tilde{Y}_{12}]$ resulting from the above described substitution, namely

\begin{equation*}
g_f^{(\lambda)}=\sum_{k=1}^{12}\lambda_k\tilde{Y}_k^2 \left( -6Y_k^2 + 4\sum_{i=1}^{12} \lambda_iY_i^2+2 \right) +
\sum_{k,\ell=1}^{12} \lambda_k\tilde{Y}_k\lambda_\ell\tilde{Y}_\ell \left(8Y_k Y_\ell + 6\sum_{i,j=1}^{12}\lambda_iY_i\lambda_jY_j \right)
\end{equation*}
is non-negative.

Now observe that the  number of $12$-partitions of $25$ is $100$. 
Hence the various substitutions described above produce 100  polynomials each of which only involves $24$ variables. Thus the problem of testing non-negativity of $g_f$, which is a polynomial in 50 variables 
reduces to checking if all of the $24$-variate polynomials are non-negative. Whereas the authors were not directly able to verify the non-negativity of $g_f$, it was possible to numerically verify the non-negativity of the $100$ resulting polynomials in fewer variables using a standard numerical sums-of-squares implementation. 

\end{ex}

\section*{Acknowledgement}
The research presented in this article was carried out during the first and third author's stay at the Aalto Science Institute during the summer internship program 2014. The authors would like to thank the Aalto Science Institute for the hospitality and support. Further, we would like to thank two anonymous referees for insightful comments.  \nocite{*}
\bibliographystyle{abbrv}

\begin{thebibliography}{10}

\bibitem{amir}
A.~A. Ahmadi, A.~Olshevsky, P.~A. Parrilo, and J.~N. Tsitsiklis.
\newblock {NP}-hardness of deciding convexity of quartic polynomials and
  related problems.
\newblock {\em Mathematical Programming}, 137(1-2):453--476, 2013.

\bibitem{BPR}
S.~Basu, R.~Pollack, and M.-F. Roy.
\newblock On the combinatorial and algebraic complexity of quantifier
  elimination.
\newblock {\em Journal of the ACM (JACM)}, 43(6):1002--1045, 1996.

\bibitem{BS}
S.~Basu and C.~Riener.
\newblock Bounding the equivariant {B}etti numbers and computing the
  generalized euler-poincar\'e characteristic of symmetric semi-algebraic sets.
\newblock {\em arXiv preprint arXiv:1312.6582}, 2013.

\bibitem{becker}
E.~Becker, V.~Powers, and T.~Wormann.
\newblock Deciding positivity of real polynomials.
\newblock {\em Contemporary Mathematics}, 253:19--24, 2000.

\bibitem{GC}
G.~Blekherman and C.~Riener.
\newblock Symmetric nonnegative forms and sums of squares.
\newblock {\em arXiv preprint arXiv:1205.3102}, 2012.

\bibitem{blum}
L.~Blum, F.~Cucker, M.~Shub, and S.~Smale.
\newblock Complexity and real computation.
\newblock 1997.

\bibitem{Bri}
E.~Briand.
\newblock When is the algebra of multisymmetric polynomials generated by the
  elementary multisymmetric polynomials?
\newblock {\em Contributions to Algebra and Geometry}, 45(2):353--368, 2004.

\bibitem{Chev}
C.~Chevalley.
\newblock Invariants of finite groups generated by reflections.
\newblock {\em American Journal of Mathematics}, pages 778--782, 1955.

\bibitem{choi1987even}
M.-D. Choi, T.-Y. Lam, and B.~Reznick.
\newblock Even symmetric sextics.
\newblock {\em Mathematische Zeitschrift}, 195(4):559--580, 1987.

\bibitem{dalbec}
J.~Dalbec.
\newblock Multisymmetric functions.
\newblock {\em Beitr{\"a}ge Algebra Geom}, 40(1):27--51, 1999.

\bibitem{bezout}
J.~Damon.
\newblock A global weighted version of {B}ezout's theorem.
\newblock In {\em The {A}rnoldfest: Proceedings of a Conference in {H}onour of
  {V}{I} {A}rnold for His Sixtieth Birthday}, volume~24, pages 115--129. Fields
  Communications Series, 1999.

\bibitem{harris1999real}
W.~R. Harris.
\newblock Real even symmetric ternary forms.
\newblock {\em Journal of Algebra}, 222(1):204--245, 1999.

\bibitem{junker_91}
F.~Junker.
\newblock Die {R}elationen, welche zwischen den elementaren symmetrischen
  {F}unctionen bestehen.
\newblock {\em Mathematische Annalen}, 38(1):91--114, 1891.

\bibitem{junker_93}
F.~Junker.
\newblock \"uber symmetrische {F}unctionen von mehreren {R}eihen von
  {V}er\-\"an\-der\-li\-chen.
\newblock {\em Mathematische Annalen}, 43(2-3):225--270, 1893.

\bibitem{junker_94}
F.~Junker.
\newblock Die symmetrischen {F}unctionen und die {R}elationen zwischen den
  {E}lementarfunctionen derselben.
\newblock {\em Mathematische Annalen}, 45(1):1--84, 1894.

\bibitem{murty}
K.~G. Murty and S.~N. Kabadi.
\newblock Some {N}{P}-complete problems in quadratic and nonlinear programming.
\newblock {\em Mathematical programming}, 39(2):117--129, 1987.

\bibitem{procesi}
C.~Procesi.
\newblock Positive symmetric functions.
\newblock {\em Advances in Mathematics}, 29(2):219--225, 1978.

\bibitem{Bruce1}
B.~Reznick.
\newblock Forms derived from the arithmetic-geometric inequality.
\newblock {\em Mathematische Annalen}, 283(3):431--464, 1989.

\bibitem{Rie}
C.~Riener.
\newblock On the degree and half-degree principle for symmetric polynomials.
\newblock {\em Journal of Pure and Applied Algebra}, 216(4):850--856, 2012.

\bibitem{Rie2}
C.~Riener.
\newblock Symmetric semi-algebraic sets and non-negativity of symmetric
  polynomials.
\newblock {\em ArXiv preprint arXiv:1409.0699}, 2013.

\bibitem{sch}
L.~Schl{\"a}fli.
\newblock {\"U}ber die {R}esultante eines {S}ystemes mehrerer algebraischen
  {G}leichungen: ein {B}eitrag zur {T}heorie der {E}limination.
\newblock {\em Denkschr. Kais. Akad. Wiss. Math.-Natur. Kl}, 4:9--112, 1852.

\bibitem{sottile2010}
F.~Sottile.
\newblock {\em Real solutions to equations from geometry}, volume~57.
\newblock American Mathematical Soc., 2011.

\bibitem{sturmfels}
B.~Sturmfels.
\newblock {\em Solving systems of polynomial equations}.
\newblock Number~97. American Mathematical Soc., 2002.

\bibitem{timofte-2003}
V.~Timofte.
\newblock On the positivity of symmetric polynomial functions. {P}art i:
  General results.
\newblock {\em Journal of Mathematical Analysis and Applications},
  284(1):174--190, 2003.

\bibitem{timofte2}
V.~Timofte.
\newblock On the positivity of symmetric polynomial functions. {P}art ii:
  Lattice general results and positivity criteria for degrees 4 and 5.
\newblock {\em Journal of mathematical analysis and applications},
  304(2):652--667, 2005.

\bibitem{vac}
F.~Vaccario.
\newblock The ring of multisymmetric functions.
\newblock {\em Ann. Inst. Fourier, Grenoble}, 55(3):717--731, 2005.

\end{thebibliography}

\end{document}